\documentclass[12pt,A4,leqno]{amsart}
\usepackage{amsfonts}
\usepackage{mathrsfs}
\usepackage[T1]{fontenc}
\usepackage{amssymb,amscd}
\usepackage{upgreek}
\usepackage{color}
\usepackage{epsf}
\usepackage{graphicx}
\usepackage{extarrows}
\usepackage[curve]{xypic}

\theoremstyle{plain}
\newtheorem{thm}{Theorem}[section]

\newtheorem{prop}[thm]{Proposition}
\newtheorem{cor}[thm]{Corollary}

%[section]

\theoremstyle{definition}
\newtheorem{defi}[thm]{Definition}
\newtheorem{exam}[thm]{Example}
\newtheorem{rem}[thm]{Remark}

\setlength{\oddsidemargin}{1.05cm} 
\setlength{\evensidemargin}{1.05cm}
\setlength{\textwidth}{14.5cm}
\setlength{\textheight}{20cm}

% Blackboard-bold symbols

\newcommand{\R}{\mathbb R}
\newcommand{\Z}{\mathbb Z}

\newcommand{\nn}{\vskip 0.2cm}
\newcommand{\n}{\vskip 0.1cm}

\makeatletter
\renewcommand{\@secnumfont}{\bfseries}
\makeatother

\makeatletter
\def\section{%
  \@startsection{section}{1}
    {\z@}
    {2.0ex plus 0.8ex minus .1ex}
    {1.0ex plus .2ex}
    {\bfseries\large\centering\MakeUppercase}%
}
\makeatother

\begin{document}

\title [\ ] {On Riemannian polyhedra with non-obtuse dihedral angles in $3$-manifolds with positive scalar curvature}

\author{Li Yu}
\address{Department of Mathematics, Nanjing University, Nanjing, 210093, P.R.China.
  }
 \email{yuli@nju.edu.cn}

%\date{\today}

\keywords{Riemannian polyhedron, simple convex polytope, positive scalar curvature, Andreev's theorem}

%\subjclass[2000]{57M60, 57M50, 57S17, 57R85}

\thanks{2020 \textit{Mathematics Subject Classification}.  51M20, 51F15, 52B10, 53C23, 57M50, 57S12\\
 This work is partially supported by 
 Natural Science Foundation of China (grant no.11871266) and 
 the PAPD (priority academic program development) of Jiangsu higher education institutions.}

\begin{abstract}
  We determine the combinatorial types of all the $3$-dimensional simple convex polytopes in $\R^3$ that can be realized as  mean curvature convex (or totally geodesic)  Riemannian polyhedra with non-obtuse dihedral angles in  Riemannian $3$-manifolds with positive scalar curvature.  This result can be considered as an analogue of Andreev's theorem on $3$-dimensional hyperbolic polyhedra with non-obtuse dihedral angles. In addition, we construct many examples of such kind of simple convex polytopes in higher dimensions.
  \end{abstract}

\maketitle

 \section{Introduction}

 A \emph{Riemannian polyhedron} of dimension $n$ is a  polyhedral domain $W$ with faces in an $n$-dimensional Riemannian manifold $(M,g)$ with the induced Riemannian metric $g(W)=g|_W$ (see Gromov~\cite[\S\,1]{Gromov14}). Moreover, each codimension-one face (or \emph{facet}) $F_i$ of $W$ is contained in a smooth hypersurface $\Sigma_i$ of $M$ such that
  \begin{itemize}
  \item  Whenever two facets $F_i$ and $F_j$ of $W$ are adjacent,
  the corresponding hypersurfaces $\Sigma_i$ and $\Sigma_j$ intersect transversely in $M$.\n
   
  \item The boundary $\partial F_i$ of each facet $F_i$
     consists of $F_i\cap F_j$ where $F_j$ ranges over all the facets adjacent to $F_i$, and the decomposition $\partial F_i= \bigcup_j F_i\cap F_j$ gives an 
     $(n-1)$-dimensional Riemannian polyhedron structure 
     to $F_i$.  \n
  \end{itemize}
  
  Note that the ambient manifold $M$ here
is not necessarily compact or closed.
Typical examples of Riemannian polyhedra are the intersections of finitely many domains with
smooth mutually transversal boundaries in a Riemannian
manifold (e.g. convex polytopes in the Euclidean space
$\R^n$).\n

 \begin{defi}
  Let $W_i$ be a Riemannian polyhedron in $(M_i,g_i)$, $i=1,2$. We call $W_1$ and $W_2$ \emph{combinatorially equivalent} if 
   there is a bijection between their faces that preserves
the inclusion relation. We call $W_1$ and $W_2$ \emph{pseudo-diffeomorphic} if there exists a homeomorphism 
$\varphi: W_1\rightarrow W_2$ which is the restriction of a diffeomorphism from an open neighborhood of $W_1$ in $M_1$ to an open neighborhood of $W_2$ in $M_2$, and we call 
 $\varphi$ a \emph{pseudo-diffeomorphism} from $W_1$ to $W_2$. If, moreover, $\varphi$ is face-preserving (i.e. $\varphi$ maps every face of $W_1$ homeomorphically onto a face of $W_2$), we call $\varphi$ a \emph{diffeomorphism} from $W_1$ to $W_2$. In this case,
we also say that
$W_2$ is a \emph{realization} of $W_1$ in $(M_2,g_2)$. 
It is clear that a diffeomorphism from $W_1$ to $W_2$ induces a combinatorial equivalence.
\end{defi}
   
By Wiemeler~\cite[Corollary 5.3]{Wiem13} (also see Davis~\cite[Corollary 1.3]{Da14}), two simple convex polytopes in $\R^n$ are diffeomorphic (as Riemannian polyhedra) if and only if they are combinatorially equivalent. Recall that a convex polytope $P$ in $\R^n$ is called \emph{simple} if every codimension-$k$ face of $P$ is contained in exactly $k$ different facets of $P$.
The reader is referred to~\cite{Zieg95} for the basic notions of 
convex polytopes. \n

 \begin{defi}
Let $W$ be a Riemannian polyhedron.
\begin{itemize}
\item We say that $W$ has \emph{acute} or \emph{non-obtuse dihedral angles} if the dihedral angle function on every codimension-two face of $W$ ranges in
$(0, \pi\slash 2)$ or $(0, \pi\slash 2]$. Especially, we call $W$ \emph{right-angled} if the dihedral angle function on every codimension-two face of $W$ is constantly $\pi\slash 2$.  \n

\item We call $W$ \emph{mean curvature convex} if every facet of $W$ has non-negative mean curvature in the ambient Riemannian manifold. Especially, we call $W$ \emph{totally geodesic} if every facet of $W$ is a totally geodesic submanifold.
\end{itemize}
\end{defi}

\noindent \textbf{Convention:} In this paper, the mean curvature of a boundary point of a domain in a Riemannian manifold is always taken with respect to the inward unit normal vector.\n

 The main purpose of this paper is to study the combinatorial types of all the $3$-dimensional simple convex polytopes in $\R^3$ that can be realized as mean curvature convex (or totally geodesic) Riemannian polyhedra with non-obtuse dihedral angles in Riemannian $3$-manifolds with positive scalar curvature. 
  \n
   
  For $n\geq 1$, let $\Delta^n$ and $[0,1]^n$ denote the standard $n$-simplex and $n$-cube in $\R^n$, respectively.
   The following is the main theorem of this paper.
     
   \begin{thm}\label{Thm-Main-1}
  Suppose $P$ is a $3$-dimensional simple convex polytope in $\R^3$.
  Then $P$ can be realized as a mean curvature convex Riemannian polyhedron 
   with non-obtuse dihedral angles in a Riemannian $3$-manifold with positive scalar curvature
    if and only if $P$ is combinatorially equivalent to a
    convex polytope that can be
     obtained from $\Delta^3$ by a sequence of
      vertex-cuts.
    \end{thm}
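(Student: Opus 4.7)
The plan is to prove both implications of Theorem~\ref{Thm-Main-1} by induction on the number of vertex-cuts required to produce (the combinatorial type of) $P$ from $\Delta^3$.

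For the \emph{sufficiency} direction I will construct the realizations inductively. The base case $\Delta^3$ is realized as a small regular geodesic tetrahedron in the round sphere $S^3$: its four facets lie on great $2$-spheres (hence are totally geodesic, so mean-curvature zero), and for small enough size its dihedral angles are arbitrarily close to the Euclidean value $\arccos(1/3)\approx 70.5^\circ$, hence acute. For the inductive step I realize the vertex-cut of $P$ at a given vertex $v$ by a local modification: a PSC-preserving surgery replacing a small geodesic ball around $v$ by a piece of $S^3$ (via a Gromov--Lawson-type interpolation), followed by cutting off $v$ with a great $2$-sphere of $S^3$ placed so that its three dihedral angles with the facets meeting at $v$ are all $\leq \pi/2$. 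The new facet is totally geodesic, hence mean-curvature convex.

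For the \emph{necessity} direction I again induct on the number of vertices of $P$. Given a realization $P \subset (M,g)$, the key step is to locate a \emph{collapsible} triangular facet $T$: a triangle whose three adjacent facets meet pairwise along edges of $P$ and whose three ``outer'' edges have pairwise distinct endpoints outside $T$, so that contracting $T$ to a point yields a simple convex polytope $P'$ with two fewer vertices. The main analytic input is a face-by-face Gauss--Bonnet identity
\[
\int_F K_F\, dA + \int_{\partial F} \kappa_g\, ds + \sum_{v \in F}(\pi - \alpha_v^F) = 2\pi,
\]
combined with the Gauss equation $K_F = K_M(T_pF) + \det(\mathrm{II}_F)$. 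At each vertex the spherical link is a spherical triangle whose angles are the three dihedral angles there; non-obtuseness forces its sides (i.e.\ the face angles $\alpha_v^F$) to be $\leq \pi/2$, so the angle defect $\delta(v):= 2\pi - \sum_{F \ni v}\alpha_v^F$ is bounded below by $\pi/2$. Summing the face Gauss--Bonnet and noting that geodesic curvatures of a shared edge measured in adjacent facets cancel yields $\sum_F \int_F K_F\, dA + \sum_v \delta(v) = 4\pi$; combining with the mean-convexity $H_F \geq 0$ and the positive ambient scalar curvature $R>0$ (using $K_M(T_pF) = R/2 - \mathrm{Ric}(n_F,n_F)$) produces a combinatorial constraint on $P$ sharp enough to guarantee existence of the desired collapsible $T$. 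Once such a $T$ is found, I close the induction by realizing $P'$ in a new PSC $3$-manifold obtained from $(M,g)$ by excising a thin neighborhood of $T$, gluing in a model corner that contracts the three vertices of $T$ to a single vertex, and smoothing via a PSC-preserving metric interpolation.

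I expect the main obstacle to lie in the necessity direction, namely extracting from Gauss--Bonnet and positive scalar curvature a constraint strong enough to force existence not merely of a triangular facet but of a \emph{collapsible} one. This is the analog, in our setting, of the strict inequalities appearing in Andreev's theorem, and is where strict positivity of the ambient scalar curvature plays its decisive role.
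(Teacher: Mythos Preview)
Your necessity argument has a genuine gap at its core step. The claim that ``geodesic curvatures of a shared edge measured in adjacent facets cancel'' is false for a general Riemannian polyhedron. If $e=F\cap F'$ is parametrized by arclength with unit tangent $T$, and $N_F,N_{F'}$ are the inward unit conormals to $e$ inside $F,F'$, then $\kappa_g^F+\kappa_g^{F'}=\langle\nabla_T T,\,N_F+N_{F'}\rangle$. This vanishes when the dihedral angle is $\pi$ (so $N_F=-N_{F'}$), but there is no reason for it to vanish when the angle is $\le\pi/2$. So your summed identity should read
\[
\sum_F\int_F K_F\,dA \;+\;\sum_{e}\int_e\bigl(\kappa_g^F+\kappa_g^{F'}\bigr)\,ds\;+\;\sum_v\delta(v)\;=\;4\pi,
\]
and you have not explained how to control the edge term.

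Even granting some corrected integral identity, your sketch supplies no mechanism for extracting a combinatorial constraint. Mean-convexity gives only $\mathrm{tr}(\mathrm{II}_F)\ge 0$, which says nothing about $\det(\mathrm{II}_F)$; and $R>0$ does not control $K_M(T_pF)=R/2-\mathrm{Ric}(n_F,n_F)$ since $\mathrm{Ric}(n_F,n_F)$ is unconstrained. Thus $\sum_F\int_F K_F$ can be arbitrarily negative and no inequality on $\sum_v\delta(v)$ follows. Note too that the target class contains polytopes with arbitrarily many vertices, so a bound of the shape ``$\delta(v)\ge\pi/2$ forces $V$ small'' cannot be what characterizes it; and conversely, merely finding a triangular facet is not enough for your induction, since e.g.\ a cube with one vertex cut has a triangular facet whose collapse yields the cube, which lies outside the target class and has no triangle for a further step. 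Finally, your ``excise a neighborhood of $T$ and glue in a model corner while preserving PSC, mean-convexity, and non-obtuse angles'' is itself a substantial construction that you have not carried out.

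The paper avoids all of this by passing to the real moment-angle manifold $\R\mathcal{Z}_P$. For necessity, one iteratively doubles the given realization of $P$ across its facets (a Gromov--Lawson style ``doubling--smoothing''), and the non-obtuse and mean-convex hypotheses are precisely what make each doubling step preserve positive scalar curvature; after $m$ doublings one has a PSC metric on $\R\mathcal{Z}_P$. One then invokes the classification of Wu--Yu (Theorem~\ref{Thm-WuYu21}): $\R\mathcal{Z}_P$ admits a PSC metric if and only if $P$ is obtained from $\Delta^3$ by vertex-cuts. For sufficiency, the same Wu--Yu result provides a $(\Z_2)^m$-invariant PSC metric on $\R\mathcal{Z}_P$, and the fundamental domain of the action is then automatically a right-angled, totally geodesic realization of $P$. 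Your surgery-based sufficiency argument is plausible in outline but would require delicate control of dihedral angles near the glued region; the invariant-metric route gives this for free.
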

    
     \begin{defi}[Vertex-Cut] \label{Def-Vertex-Cut}
 Let $P$ be an $n$-dimensional simple convex polytope in $\R^n$ and $v$ a vertex of $P$.  Choose a plane
$H$ in $\R^n$ such that $H$ separates $v$ from the other vertices of $P$. Let $H_{\geq}$ and $H_{\leq}$ be the two
half spaces determined by $H$ and assume that $v$ belongs to $H_{\geq}$. Then $P\cap H_{\geq}$ is an $(n-1)$-simplex, and
$P\cap H_{\leq}$ is a simple convex polytope which we refer to as a \emph{vertex-cut} of $P$. 
For example, a vertex-cut of $\Delta^3$ is combinatorially equivalent to
 $\Delta^2\times [0,1]$ (the triangular prism).
 \end{defi}

   The simplicial polytope dual to a convex polytope that is obtained from $\Delta^3$ by a sequence of
      vertex-cuts is known as a stacked $3$-polytope. 
      By definition, a \emph{stacked $n$-polytope} is a polytope obtained from 
   $\Delta^n$ by repeatedly gluing another $n$-simplex onto one of its facets (see~\cite{MillReinStur07}).
 One reason for the significance of stacked polytopes is that, among all simplicial $n$-polytopes with a given number of vertices, the stacked polytopes have the fewest possible higher-dimensional faces. 
    \n

 By the proof of Theorem~\ref{Thm-Main-1}, we obtain
 the following corollary immediately.
  
      \begin{cor}\label{Cor-Main-2}
     Suppose $P$ is a $3$-dimensional simple convex polytope in $\R^3$. Then $P$ can be realized as a right-angled totally geodesic Riemannian polyhedron 
   in a Riemannian $3$-manifold with positive scalar curvature
    if and only if $P$ is combinatorially equivalent to a
    convex polytope that can be
     obtained from $\Delta^3$ by a sequence of
      vertex-cuts.
      \end{cor}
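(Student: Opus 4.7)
My plan is to deduce the necessity direction of Corollary~\ref{Cor-Main-2} directly from Theorem~\ref{Thm-Main-1} and to obtain the sufficiency direction by an inductive construction that keeps track of the stronger conclusions. For necessity, observe that a right-angled totally geodesic realization is automatically a mean curvature convex realization with non-obtuse dihedral angles: a totally geodesic facet has vanishing and hence non-negative mean curvature, while $\pi/2$ is certainly non-obtuse. Therefore, if $P$ admits a right-angled totally geodesic realization inside some Riemannian $3$-manifold with positive scalar curvature, Theorem~\ref{Thm-Main-1} forces $P$ to be combinatorially equivalent to a polytope obtained from $\Delta^3$ by a sequence of vertex-cuts.

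For the sufficiency direction, I would argue by induction on the number of vertex-cuts, verifying at each step that an appropriate ambient $3$-manifold provides a right-angled totally geodesic realization. For the base case, $\Delta^3$ is realized as the spherical octant $\{x \in S^3 \subset \R^4 : x_1, x_2, x_3, x_4 \geq 0\}$ in the unit round $3$-sphere: its four facets are hemispheres of the pairwise orthogonal totally geodesic great $2$-spheres $\{x_i = 0\}$, all dihedral angles equal $\pi/2$, and the ambient scalar curvature is constantly $6>0$. For the inductive step, given a right-angled totally geodesic realization $P \subset (M,g)$ in positive scalar curvature and a vertex $v$ of $P$, I would realize the corresponding vertex-cut by a local metric surgery in a small neighborhood of $v$: since the three facets through $v$ are mutually orthogonal totally geodesic surfaces, the local corner has a standard model, and I would excise a small neighborhood of $v$ and graft in a matching corner of the spherical triangular prism $T \times [0,\varepsilon] \subset S^2 \times \R$, where $T \subset S^2$ is the right-angled spherical octant triangle. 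The totally geodesic slice $T \times \{\varepsilon\}$ then serves as the new triangular facet of the vertex-cut, meeting each of the three inserted rectangular facets (the prismatic extensions of the original facets through $v$) orthogonally and totally geodesically, and the inserted region has scalar curvature $2>0$.

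The main obstacle will be the metric gluing itself: one must interpolate between $g$ on the outside and the product metric of $S^2 \times \R$ on the inside so that (i) the three facets through $v$ remain simultaneously totally geodesic and pairwise orthogonal across the interface, and (ii) the scalar curvature stays strictly positive throughout the transition region. My tactic would be to work inside a fundamental domain for the local $\Z_2^3$ reflection symmetry across the three facets, so that each facet is the fixed locus of an isometric involution and hence automatically totally geodesic, and then to perform the interpolation using a rotationally symmetric warped-product ansatz together with a Gromov--Lawson-type bending of the warping function that preserves positive scalar curvature. Once this analytic step is settled, the combinatorial check that the new polyhedron is the desired vertex-cut of $P$ is immediate, and iterating the procedure yields right-angled totally geodesic realizations of every polytope in the stated combinatorial class.
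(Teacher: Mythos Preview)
Your necessity argument is exactly what the paper intends: a right-angled totally geodesic realization is in particular mean curvature convex with non-obtuse dihedral angles, so Theorem~\ref{Thm-Main-1} applies directly.

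For the sufficiency, however, you are working much harder than necessary and leaving a real gap. The point you are missing is that the ``if'' part of the proof of Theorem~\ref{Thm-Main-1} already produces a \emph{right-angled totally geodesic} realization, not merely a mean curvature convex non-obtuse one. Indeed, that argument invokes Theorem~\ref{Thm-WuYu21} to put a $(\Z_2)^m$-invariant positive scalar curvature metric on $\R\mathcal{Z}_P$, and then $P$ sits inside $\R\mathcal{Z}_P$ as a fundamental domain whose facets are fixed-point sets of isometric involutions, hence automatically totally geodesic and meeting at right angles. So Corollary~\ref{Cor-Main-2} is literally immediate from the proof already given; no new construction is required.

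Your inductive surgery proposal, by contrast, attempts to rebuild that result from scratch at the level of the quotient polytope, and the step you flag as ``the main obstacle'' is a genuine one that you have not resolved. Near the vertex $v$ the metric $g$ need not be rotationally symmetric or isometric to any standard model; all you know is that three totally geodesic surfaces meet orthogonally. Grafting in a piece of $S^2\times\R$ while keeping all three facets simultaneously totally geodesic, mutually orthogonal, and the scalar curvature strictly positive across the transition is precisely the content of the equivariant codimension-$3$ surgery theorem (Theorem~\ref{Thm-Surgery-Equiv}), applied on the $(\Z_2)^m$-cover $\R\mathcal{Z}_P$; this is what underlies the Wu--Yu result already being cited. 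Reproving it by hand with a warped-product ansatz is neither easier nor more elementary, and your sketch does not supply the analysis. The cleanest fix is simply to observe that the realization constructed in the ``if'' part of Theorem~\ref{Thm-Main-1} is already right-angled and totally geodesic.
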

    
  Note that Theorem~\ref{Thm-Main-1} and Corollary~\ref{Cor-Main-2} still hold if we assume the scalar curvature of the ambient Riemannian $3$-manifold to be positive constant (see Corollary~\ref{Cor-Constant-SC}). 
\n

 Theorem~\ref{Thm-Main-1} can be thought of as an analogue of Andreev's theorem (see~\cite{Andreev70,Andreev70-2}) on totally geodesic polyhedra with non-obtuse dihedral angles in the $3$-dimensional hyperbolic space
   $\mathbb{H}^3$ (see~\cite{RoeHubDun07} for a new proof).
Andreev's theorem is essential
for proving Thurston’s Hyperbolization theorem for Haken $3$-manifolds. Especially,
Andreev's theorem tells us that a simple convex $3$-polytope $P$ can be realized as
a right-angled totally geodesic hyperbolic polyhedron in $\mathbb{H}^3$ if and only if $P$ has no prismatic $3$-circuits or prismatic $4$-circuits (this result was also obtained by A.~V.~Pogorelov in an earlier paper~\cite{Pog67}).
\n

 In addition, the following question proposed by M.~Gromov
 in~\cite{Gromov14} is related to our study.\n

 \textbf{Question} (Gromov~\cite[\S 1.7]{Gromov14}): What are the possible combinatorial types of mean curvature convex Riemannian polyhedra $W$ with acute dihedral angles in a Riemannian manifold with non-negative scalar curvature?\n

 If $W$ in the above question is the realization of $3$-dimensional simple convex polytope in a Riemannian $3$-manifold with positive scalar curvature, then
 the combinatorial type of $W$ must belong to the cases described in Theorem~\ref{Thm-Main-1}. But conversely, it is not clear whether we can construct a Riemannian polyhedron with acute dihedral angles for every combinatorial type described in Theorem~\ref{Thm-Main-1}.
 Indeed, the Riemannian polyhedra constructed in the proof of Theorem~\ref{Thm-Main-1} are all right-angled.  \n

The paper is organized as follows. In Section~\ref{Sec-Real-MAM}, we review the definition of real moment-angle manifold associated to a simple convex polytope. Besides, we quote a result from Wu-Yu~\cite{WuYu21} on when a $3$-dimensional real moment-angle manifold can admit a Riemannian metric with positive scalar curvature. In Section~\ref{Sec-Proof}, we give a proof of Theorem~\ref{Thm-Main-1} using the idea of ``doubling-smoothing'' of Riemannian manifolds
described in~\cite[\S 2.1]{Gromov14} along with the result from~\cite{WuYu21}. In Section~\ref{Sec-High-Dim}, we construct some examples of totally geodesic non-obtuse Riemannian polyhedra with positive scalar curvature in higher dimensions and propose a question.

\nn

\section{Real moment-angle manifolds}
\label{Sec-Real-MAM}

    Suppose $P$ is an $n$-dimensional simple convex polytope in Euclidean space $\R^n$.
         Let $\mathcal{F}(P)=\{F_1,\cdots, F_m\}$ be the set of all facets of $P$. Let $\Z_2=\Z\slash 2\Z$ and
    let $e_1,\cdots, e_m$ be a basis of
  $(\Z_2)^m$. Define a function $\lambda_0 : \mathcal{F}(P) \rightarrow (\Z_2)^m$ by
    \begin{equation}\label{equ-Lambda_0}
     \lambda_0(F_i) = e_i, \ 1\leq i \leq m.
    \end{equation}
     
      For any proper face $f$ of $P$, let $G_f$ denote the subgroup of $(\Z_2)^m$ generated by
  the set $\{ \lambda_0(F_i) \, |\, f\subset F_i \}$. For any point $p\in
  P$, let $f(p)$ denote the unique face of $P$ that contains $p$ in
  its relative interior. In~\cite[Construction 4.1]{DaJan91},
 the \emph{real moment-angle manifold} $\R \mathcal{Z}_{P}$ of $P$ is a closed orientable $n$-manifold
  defined by the following quotient construction
    \begin{equation} \label{Equ:Real-Moment-Angle}
     \R \mathcal{Z}_{P} := P\times (\Z_2)^m \slash \sim
    \end{equation}
  where $(p,g) \sim (p',g')$ if and only if $p=p'$ and $g^{-1}g' \in G_{f(p)}$.   
 So at every vertex of $P$, $2^n$ copies of $P$ are glued together so that locally they look like the $2^n$ cones of $\R^n$ bounded by the $n$ coordinate hyperplanes meeting at the origin. Let
 \begin{equation} \label{Equ-quotient-eta}
  \eta: P\times (\Z_2)^m \rightarrow  \R \mathcal{Z}_{P}
 \end{equation}
 be the quotient map.
   There is a \textit{canonical action} of
       $(\Z_2)^m$ on $\R\mathcal{Z}_{P}$ defined by
   $$ g'\cdot [(p,g)] = [(p,g'+g)],\ \forall\, p\in P, \, \forall\, g,g'\in (\Z_2)^m,  $$  
  whose orbit space can be identified with $P$.
    Let 
    $$\Theta_P: \R \mathcal{Z}_{P} \rightarrow P$$ 
    be the 
  orbit map.       
   Note that each facet $F$ of $P$ is also a simple convex polytope and, $\Theta^{-1}_P(F)$ is a disjoint union of several copies of $\R\mathcal{Z}_F$ embedded in $\R \mathcal{Z}_{P}$ with trivial normal bundles. \n

 The study of real moment-angle manifolds is an important subject in toric topology. The reader is referred to 
  Davis-Januszkiewicz~\cite{DaJan91}, Buchstaber-Panov~\cite{BP15}, Kuroki-Masuda-Yu~\cite{KurMasYu15} 
  and Wu-Yu~\cite{WuYu21} for more information of
 the topological and geometric properties of
  real moment-angle manifolds.
  The construction of $\R\mathcal{Z}_P$
  in~\eqref{Equ:Real-Moment-Angle} also makes sense for any smooth nice manifold with corners. The topology of such generalized spaces are studied in a recent paper Yu~\cite{Yu20}.\n

    In addition, we can realize $\R \mathcal{Z}_{P}$ as a non-degenerate intersection of $m-n$ real quadrics (quadric hypersurfaces) in $\R^m$, which induces a 
    $(\Z_2)^m$-invariant smooth structure on $\R\mathcal{Z}_P$ (see~\cite[\S 6]{BP15}). Consider a presentation of $P$ as follows:
    \begin{equation} \label{Equ-Presentation}
       P = P(A,b)=\{ x\in \R^n\,|\, \langle a_i,x\rangle + b_i\geq 0, \, i=1,\cdots,m\} 
    \end{equation}
   where $A=(a_1,\cdots,a_m)$ is an $n\times m$ real matrix. Since $P$ has a vertex, the rank of $A$ is equal to $n$. Define a map
   \begin{equation} \label{Equ-i-Ab}
     i_{A,b} : \R^n\rightarrow \R^m, \ \ i_{A,b}(x)=A^tx+b
     \end{equation}
   where $b=(b_1,\cdots,b_m)^t \in \R^m$. So the map
   $i_{A,b}$ embeds $P$ into the positive cone 
   $\R^m_{\geq 0}=\{ (x_1,\cdots,x_m)\in \R^m\,|\, x_i\geq 0, i=1,\cdots,m  \}$.
   We can define a space $\R\mathcal{Z}_{A,b}$ by the following commutative diagram
  \[   \xymatrix{
          \R\mathcal{Z}_{A,b} \ar[d] \ar[r]^{i_{\mathcal{Z}}}
                & \R^m \ar[d]^{\mu}  \\
          P  \ar[r]^{i_{A,b}} & \R^m_{\geq 0}
                 } 
                 \]     
     where $\mu(x_1,\cdots, x_m)=(x^2_1,\cdots, x^2_m)$.
     Clearly $(\Z_2)^m$ acts on
     $\R\mathcal{Z}_{A,b}$ with quotient space $P$ and
     $i_{\mathcal{Z}}$ is a $(\Z_2)^m$-equivariant embedding. It is easy to see that $\R\mathcal{Z}_{A,b}$ is homeomorphic to $\R\mathcal{Z}_P$. In addition, the image of $\R^n$ under $i_{A,b}$ is an affine plane of dimension $n$ in $\R^m$, which we can specify by $m-n$ linear equations:
    \begin{align*}
      i_{A,b}(\R^n) &=\{ y\in \R^m \,|\, y=A^tx+b, x\in\R^n\} \\
       &=\{y\in \R^m\,|\, \Gamma y =\Gamma b\}
       \end{align*}
       where $\Gamma=(\gamma_{jk})$ is an
       $(m-n)\times m$ matrix of rank $m-n$ so that
       $\Gamma A^t=0$. In other words, the rows of $\Gamma$ form a basis of all the linear relations
among $a_1,\cdots,a_m$. Then we can write the image of $\R\mathcal{Z}_{A,b}$ under $i_{\mathcal{Z}}$ explicitly as the common zeros of $m-n$ real quadratic equations in $\R^m$:
\begin{equation} \label{Equ-Quadrics}
  i_{\mathcal{Z}} (\R\mathcal{Z}_{A,b})= \big\{ 
  (y_1,\cdots,y_m)^t\in \R^m \,|\, \sum^m_{k=1} \gamma_{jk} y^2_k = \sum^m_{k=1} \gamma_{jk} b_k, \,
  1\leq j \leq m-n
\big\}. 
\end{equation}

 The above intersection of real quadrics is non-degenerate (i.e. the gradients of these quadrics are linearly independent everywhere in their intersection). This implies that $\R\mathcal{Z}_{A,b}$
 is embedded as an $n$-dimensional smooth submanifold in $\R^m$ where $(\Z_2)^m$ acts smoothly. So $P$ is embedded as a Riemannian polyhedron 
 in $i_{\mathcal{Z}}(\R\mathcal{Z}_{A,b})$ with the induced metric from
 $\R^m$. 
 
  \begin{exam} \label{Exam-Simplex}
    The standard simplex $\Delta^n \subset \R^n$ is defined
  by 
  $$\Delta^n =\{ (x_1,\cdots, x_n)\in \R^n\,|\, x_1+\cdots + x_n\leq 1, \, x_i\geq 0, \, i=1,\cdots, n.  \}$$
    By the notation in~\eqref{Equ-Presentation}, we have $\Delta^n= P(A,b)$ where 
  \[ A_{n\times (n+1)}=  \begin{pmatrix}
        1 & 0 & \cdots & 0 & -1   \\
        0 & 1 & \cdots & 0 & -1 \\
        \vdots & \vdots & & \vdots & \vdots \\
         0 & 0 & \cdots & 1 & -1
   \end{pmatrix}, \ \ \
    b= (0,0,\cdots,0,1)^t \in \R^{n+1}. \]
 So the image of the embedding $i_{A,b}:\Delta^n\hookrightarrow \R^{n+1}$ (see~\eqref{Equ-i-Ab}) is given by
  $$ \{ (x_1,\cdots, x_{n+1})\in \R^{n+1}\,|\, x_1+\cdots+x_{n+1}=1, \, x_i\geq 0,\, i=1,\cdots,n+1  \}.  $$
    Then by~\eqref{Equ-Quadrics},
  $i_{\mathcal{Z}}(\R\mathcal{Z}_{A,b}) \subset \R^{n+1}$ is given by the following equation 
  \[  y^2_1 + \cdots + y^2_{n+1}=1, \ (y_1,\cdots, y_{n+1})^t \in \R^{n+1} \]
  which is exactly the standard unit sphere $S^n$ in $\R^{n+1}$. Moreover, the canonical $(\Z_2)^{n+1}$-action on
  $\R\mathcal{Z}_{\Delta^n}$ is equivalent to the
   action of $(\Z_2)^{n+1}$ on $S^n$ by the reflections
   about the coordinate hyperplanes of $\R^{n+1}$. 
   More precisely, for each $1\leq i \leq n+1$, the $i$-th generator of $(\Z_2)^{n+1}$
   acts on $S^n$ by
   \[ (y_1,\cdots, y_{i-1}, y_i, y_{i+1}, \cdots , y_{n+1} ) \longrightarrow
    (y_1,\cdots, y_{i-1}, -y_i, y_{i+1}, \cdots , y_{n+1} ).  \]
    In the following, we think of $\R\mathcal{Z}_{\Delta^n}$ as $S^n$ equipped with the above $(\Z_2)^{n+1}$-action.   
  \end{exam}

    The theorem below tells us what kind of $3$-dimensional real moment-angle manifolds can admit a Riemannian metric with positive scalar curvature.
    
     \begin{thm}[Proposition 4.8 and Corollary 4.10 in Wu-Yu~\cite{WuYu21}] \label{Thm-WuYu21}
 \ \n
  Let $P$ be a $3$-dimensional simple convex polytope in $\R^3$ with $m$ facets. 
   Then $\R \mathcal{Z}_P$ admits a Riemannian metric with positive scalar curvature if and only if
  $P$ is combinatorially equivalent to a
    convex polytope that can be
     obtained from $\Delta^3$ by a sequence of
      vertex-cuts.
  Moreover, 
 we can choose the Riemannian metric with positive scalar curvature on $\R\mathcal{Z}_P$ to be invariant with respect to the canonical $(\Z_2)^m$-action on $\R\mathcal{Z}_P$.
\end{thm}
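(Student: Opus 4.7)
My strategy splits the biconditional into two independent arguments. For the ``if'' direction I induct on the number of vertex-cuts needed to build $P$ from $\Delta^3$, bootstrapping the round PSC metric on $\R\mathcal{Z}_{\Delta^3}=S^3$ along equivariant codimension-$3$ surgeries. For the ``only if'' direction I use Perelman's resolution of the geometrization conjecture to extract strong topological restrictions on $\R\mathcal{Z}_P$, and then translate these back into combinatorial constraints on $P$. The moreover statement about $(\Z_2)^m$-invariance is built into the inductive construction.

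For the base case of the ``if'' direction, Example~\ref{Exam-Simplex} identifies $\R\mathcal{Z}_{\Delta^3}$ with $S^3\subset\R^4$ equipped with its $(\Z_2)^4$-action by coordinate reflections; the round metric is $O(4)$-invariant with positive constant scalar curvature, giving the claim. Inductively, suppose $\R\mathcal{Z}_P$ admits a $(\Z_2)^m$-invariant PSC metric and let $P'$ be the vertex-cut of $P$ at a vertex $v$. Since $v$ lies in exactly three facets of the simple polytope $P$, its stabilizer in $(\Z_2)^m$ is $(\Z_2)^3$, and its $(\Z_2)^m$-orbit in $\R\mathcal{Z}_P$ consists of $2^{m-3}$ isolated smooth points. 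A direct local analysis of the quadric equations~\eqref{Equ-Quadrics} shows that $\R\mathcal{Z}_{P'}$ is obtained from $\R\mathcal{Z}_P$ by an equivariant $0$-surgery along this orbit: round $(\Z_2)^3$-invariant $3$-disks around the vertex points are removed in pairs and replaced with $S^2\times[0,1]$ necks on which the new generator $e_{m+1}\in(\Z_2)^{m+1}$ acts as the reflection swapping the two boundary spheres. The codimension is $3$, so an equivariant version of the Gromov--Lawson neck-bending --- performed symmetrically about the midsphere of each neck so as to respect the new reflection symmetry --- extends the invariant PSC metric across the surgery to give a smooth $(\Z_2)^{m+1}$-invariant PSC metric on $\R\mathcal{Z}_{P'}$.

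For the ``only if'' direction, assume $\R\mathcal{Z}_P$ admits a PSC metric. Being a closed orientable $3$-manifold, the geometrization theorem implies that its prime decomposition consists entirely of spherical space forms $S^3/\Gamma$ and copies of $S^2\times S^1$, so $\pi_1(\R\mathcal{Z}_P)$ is a free product of finite groups and infinite cyclic groups and in particular contains no $\Z^2$ subgroup. Recalling that $\pi_1(\R\mathcal{Z}_P)$ is isomorphic to the commutator subgroup of the right-angled Coxeter group $W_P$ associated with $P$, I would then prove the combinatorial-topological dichotomy: either $P$ is already $\Delta^3$, or $P$ admits a ``vertex-uncut'' reducing it to a simple $3$-polytope of the same class but fewer facets, or $P$ carries an essential belt that produces a $\pi_1$-injective torus in $\R\mathcal{Z}_P$ (yielding $\Z^2\subset\pi_1$ and hence a contradiction). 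Incompressibility of the torus would be established by lifting to the universal cover of $\R\mathcal{Z}_P$, which is the Davis complex of $W_P$ and carries a natural CAT(0) structure. Iterating the vertex-uncut reduction must terminate at $\Delta^3$, since each uncut strictly decreases the number of facets.

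The main obstacle is the combinatorial-topological dichotomy in the ``only if'' direction. The prism $\Delta^2\times[0,1]$ already has a non-trivial $3$-belt (its three rectangular facets) and yet $\R\mathcal{Z}_{\Delta^2\times[0,1]}=S^2\times S^1$ has PSC, so ``$P$ has a non-trivial $3$-belt'' cannot by itself be the obstruction; the correct notion must identify exactly those belts that survive every sequence of vertex-uncuts, and verifying that such a belt always yields a $\pi_1$-injective torus requires a careful interplay between the combinatorial reduction on the face lattice of $P$ and the incompressible-surface theory of the $3$-manifold $\R\mathcal{Z}_P$. A secondary, more routine technical point is ensuring that the Gromov--Lawson neck bending in the inductive step can be carried out in a manner that is simultaneously smooth and $(\Z_2)^{m+1}$-equivariant, which I would handle by choosing a bending profile symmetric about the midsphere of each neck.
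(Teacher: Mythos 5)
First, a point of order: the paper does not prove Theorem~\ref{Thm-WuYu21} at all --- it is imported verbatim from Proposition 4.8 and Corollary 4.10 of Wu--Yu~\cite{WuYu21} and used as a black box. So there is no internal proof to compare yours against; what can be said is how your sketch relates to the argument the paper \emph{does} contain for the closely related Proposition~\ref{Prop-Examples}. Your ``if'' direction is essentially that argument specialized to $n=3$: a vertex-cut is a flip at a codimension-three face, hence corresponds to an equivariant codimension-three surgery, and the equivariant Gromov--Lawson construction (Theorem~\ref{Thm-Surgery-Equiv}) propagates the invariant PSC metric from the round $S^3=\R\mathcal{Z}_{\Delta^3}$. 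One correction to your surgery description: since a vertex-cut increases the number of facets from $m$ to $m+1$, the surgery is performed on $\R\mathcal{Z}_P\times\Z_2$ (two disjoint copies), not on a single copy of $\R\mathcal{Z}_P$; each $S^2\times[0,1]$ neck joins a vertex-point in one copy to the corresponding point in the other, the new generator $e_{m+1}$ swaps the two copies and fixes the midspheres, and those midspheres form $\Theta_{P'}^{-1}$ of the new triangular facet. This is exactly the statement in Section~\ref{Sec-High-Dim} that a flip at a face of dimension $0$ is a $(\Z_2)^{m+1}$-equivariant surgery on $\R\mathcal{Z}_P\times\Z_2$. With that fix, the ``if'' half and the ``moreover'' clause are sound.

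The ``only if'' direction, however, contains a genuine gap, and it is precisely the one you flag yourself: the ``combinatorial--topological dichotomy'' is the entire content of the hard half of the theorem, and you assert it without proof or even a precise formulation. Reducing via geometrization to ``$\pi_1(\R\mathcal{Z}_P)$ is a free product of finite groups and copies of $\Z$, hence virtually free, hence contains no $\Z^2$ and no closed surface subgroup of positive genus'' is the right first move; but the claim that every simple $3$-polytope which is not dual-stacked either admits a ``vertex-uncut'' or carries a belt whose preimage $\Theta_P^{-1}$ of the belt's cross-sectional disk is a $\pi_1$-injective closed surface is exactly what must be proved, and you have not identified which belts work. (Recall that the preimage of a $k$-gonal cross-section is a disjoint union of copies of $\R\mathcal{Z}_{k\text{-gon}}$: spheres for $k=3$, tori for $k=4$, higher genus for $k\geq 5$; the $3$-belts give the prime/connected-sum decomposition and are never obstructions, so the real work is with $k\geq 4$, and your prism example shows even a $4$-belt analysis must be done after first splitting along all $3$-belts.) Your proposed incompressibility argument via a CAT(0) structure on the universal cover also does not apply as stated: $\mathcal{U}(W_P,P)$ carries the natural CAT(0) cubical structure only when $\partial P^*$ is a flag complex, which fails exactly in the presence of triangular facets and $3$-belts --- the cases you must first dispose of. As it stands, the ``only if'' direction is a plausible program, not a proof; closing it requires the combinatorial induction on belts carried out in~\cite{WuYu21}.
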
 
   
 \n
 
   \section{Proof of Theorem~\ref{Thm-Main-1}} \label{Sec-Proof}
   
  Our proof of Theorem~\ref{Thm-Main-1} is inspired by Gromov's proof of the dihedral rigidity of the $n$-cube
  in~\cite{Gromov14}. Gromov's proof is
  based on the idea of doubling the cube $n$ times and uses the well-known fact that the $n$-dimensional torus admits no metric with positive scalar curvature.
  Here we observe that for a simple convex polytope $P$ with $m$ facets, doubling $P$
  $m$ times gives the real moment-angle manifold $\R\mathcal{Z}_P$. This relates the Riemannian metric on $P$ with certain boundary conditions to that on $\R\mathcal{Z}_P$ through the
  ``doubling-smoothing'' process described in~\cite{Gromov14}.
  Then we can prove Theorem~\ref{Thm-Main-1} by the result in Theorem~\ref{Thm-WuYu21}.
 But the description of the ``doubling-smoothing'' of Riemannian metrics in~\cite{Gromov14} is very sketchy, we will give more detailed exposition in our proof of Theorem~\ref{Thm-Main-1} below. Our proof slightly generalizes
 the argument in Gromov-Lawson~\cite[Theorem 5.7]{GromovLawson80}. \n
  
   Let $A$ be a subspace of a topological space $X$. The \emph{double of $X$ along $A$} is the quotient space of the disjoint union of two copies of $X$ by identifying each point of $A$ in one copy of $X$ to the same point in the other copy. \n

 Suppose $F$ and $F'$ are two facets of a Riemannian polyhedron $W$ which intersect transversely 
  at a codimension-two face $F\cap F'$.\n
 \begin{itemize}
 \item Let $\angle (F,F')_x$ denote the dihedral angle of $W$
  at a point $x\in F\cap F'$.\n
  \item Let $\angle (F,F')$ denote the dihedral angle of $W$
  at an arbitrary point of $F\cap F'$.
 \end{itemize}
  \nn

      \textbf{\textit{Proof of Theorem~\ref{Thm-Main-1}.}}
   \n
   
    Let $P$ be a simple convex $3$-polytope in $\R^3$
    whose facets are $F_1,\cdots, F_m$. Let $\Theta_P : \R\mathcal{Z}_P\rightarrow P$ be the orbit map of the canonical $(\Z_2)^m$-action on $ \R\mathcal{Z}_P$. So each $\Theta^{-1}_P(F_i)$, $1\leq i \leq m$, consists of some closed connected $2$-manifolds
   that intersect transversely in $\R\mathcal{Z}_P$.
   Since we are working in dimension $3$, we do not need to worry about the existence or uniqueness of the smooth structures in our manifolds. \n
   
   We first prove the ``if'' part. Suppose $P$ 
  is combinatorially equivalent to a convex polytope that can be obtained from $\Delta^3$ by a sequence of vertex-cuts. Then by Theorem~\ref{Thm-WuYu21}, there exists a 
  $(\Z_2)^m$-invariant Riemannian metric $g_0$ with positive scalar curvature on $\R\mathcal{Z}_P$.
  Note that $P$ is realized
  as the fundamental domain of the canonical $(\Z_2)^m$-action on $\R\mathcal{Z}_P$ which is bounded by the submanifolds $\Theta^{-1}_P(F_i)$, $1\leq i \leq m$.
 Moreover, by definition  each $\Theta^{-1}_P(F_i)$ is the fixed point set of the generator
 $e_i\in (\Z_2)^m$ under the canonical $(\Z_2)^m$-action
 (see~\eqref{equ-Lambda_0}).
 It is a standard fact that every connected component of the fixed point set (with the induced Riemannian metric) of an isometry on a Riemannian manifold is a totally geodesic submanifold (see~\cite[Theorem 1.10.15]{Kling95}). So each $\Theta^{-1}_P(F_i)$ consists of totally geodesic submanifolds of $(\R\mathcal{Z}_P,g_0)$. In addition, 
 the dihedral angles between any components of 
  $\Theta^{-1}_P(F_i)$ and $\Theta^{-1}_P(F_j)$ (whenever they intersect) are always equal to $\pi\slash 2$ since the 
  $(\Z_2)^m$-action on $(\R\mathcal{Z}_P,g_0)$ is isometric. Therefore, $P$ is realized as a
  right-angled totally geodesic Riemannian polyhedron
  in $(\R\mathcal{Z}_P,g_0)$. The ``if'' part is proved.
   \n
   
   Next, we prove the ``only if'' part.  
    We can think of $\R\mathcal{Z}_P$ as an iterated doubling of $P$ as follows.
 By the notation in the definition of $\R\mathcal{Z}_P$ (see~\eqref{equ-Lambda_0} and~\eqref{Equ-quotient-eta}), we define
  $$H_j :=\text{the subgroup of $\Z^n_2$ generated by $e_1,\cdots, e_j$},\ 1\leq j \leq m, \ \text{and} \ H_0:=\{0\}; $$ 
  $$ Y^{(j)} = \eta(P\times H_j),\ \ F^{(j)}_i=\eta(F_i\times H_{j}), \ 1\leq i,j \leq m. $$
  
 Then $Y^{(j)}$ is the gluing of $2^j$ copies of $P$ under $\eta$ whose boundary is
 $$ \partial Y^{(j)} =  \bigcup_{i > j} F^{(j)}_i . $$
 
   In addition, for any facet $F_i$ of $P$ and any element $g\in \Z^m_2$, let
   \[ F_{i,g}:= \eta(F_i\times \{g\}), \ 1\leq i \leq m. \]
   Then we have
    $$F^{(j)}_i = \bigcup_{g\in H_j} F_{i,g}, \ 1\leq i,j\leq m. $$
So $ Y^{(j)}$ is a Riemannian polyhedron in $\R\mathcal{Z}_P$ whose facets are $\{ F_{i,g} \,|\, i>j, g\in H_{j}\}$.
 \n

\begin{figure}
        \begin{equation*}
        \vcenter{
            \hbox{
                  \mbox{$\includegraphics[width=0.9\textwidth]{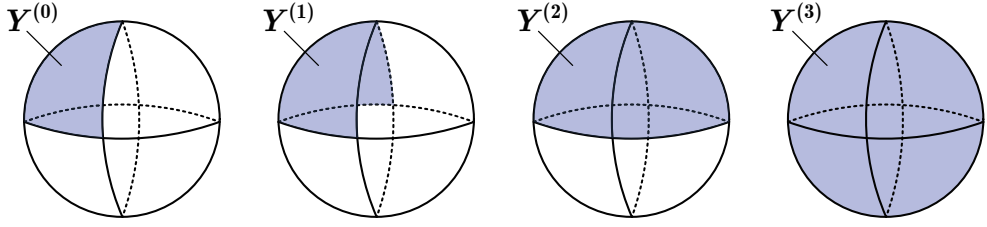}$}
                 }
           }
     \end{equation*}
   \caption{The filtration of $\R\mathcal{Z}_{\Delta^2} = S^2$
       } \label{p-Sphere}
   \end{figure}

   By identifying $P$ with $Y^{(0)}$, we have
  a filtration (see Figure~\ref{p-Sphere} for example)
   $$P= Y^{(0)} \subset Y^{(1)} \subset \cdots \subset Y^{(m)} = \R\mathcal{Z}_P.$$   
   Clearly, 
 $Y^{(j+1)}$ is
  the double of $Y^{(j)}$ along $F^{(j)}_{j+1}$ for each
  $0\leq j \leq m-1$. Note that $F^{(j)}_{j+1}$
  may not be connected. \n

 In the following, we do induction on $j$ and
assume that:
 \begin{itemize}
  \item[(a)] There exists a pseudo-diffeomorphism
$\varphi_j : Y^{(j)}\rightarrow W_j$ where $W_j$ is a mean curvature convex Riemannian polyhedron 
in a Riemannian $3$-manifold $(M_{j},g_{j})$ with  positive scalar curvature, and $\varphi_j$ maps every
 $F^{(j)}_i$ on $\partial Y^{(j)}$ to be a union of facets of 
$W_j$.
\end{itemize}
Note that here we cannot require $\varphi_j$ to be face-preserving when $j\geq 1$ since $W_j$ may have more facets than $Y^{(j)}$ (see Figure~\ref{p-Codimension-Two-Face} for example).

\n

 We say that a facet $E$ of $W_j$ \emph{comes from the facet} $F_i$ of $P$ if 
 $$\Theta_P(\varphi_j^{-1}(E))\subset F_i \ \text{or equivalently}\ E\subset \varphi_j(F^{(j)}_i)\ \text{where} \ j< i \leq m.$$
  There are two types of edges (codimension-two faces) on
$\partial W_j$ (see Figure~\ref{p-Codimension-Two-Face}):\n
 Type-I: The intersection of two facets that come from different facets of $P$.\n
 
Type-II: The intersection of two facets that come from the same facet of $P$.\n

 Moreover, we require $W_j$ to satisfy the following two conditions in our induction hypothesis. \n
 
 \begin{itemize}
 \item[(b)] The dihedral angles of $W_j$ are non-obtuse at
every Type-I edge on $\partial W_j$.\n
 
 \item[(c)] The dihedral angles of $W_j$ range in
 $(0,\pi]$ at every Type-II edge on $\partial W_j$.
 \end{itemize}
 
 \n

 \begin{figure}
        \begin{equation*}
        \vcenter{
            \hbox{
                  \mbox{$\includegraphics[width=0.94\textwidth]{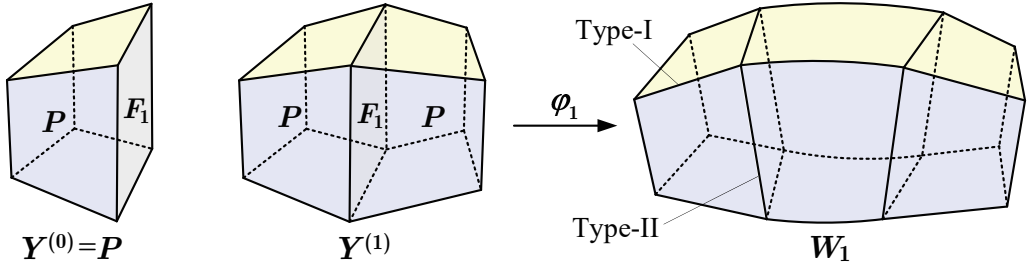}$}
                 }
           }
     \end{equation*}
   \caption{Two types of edges on the boundary of $W_1$
       } \label{p-Codimension-Two-Face}
   \end{figure}
 
\begin{figure}
        \begin{equation*}
        \vcenter{
            \hbox{
                  \mbox{$\includegraphics[width=0.69\textwidth]{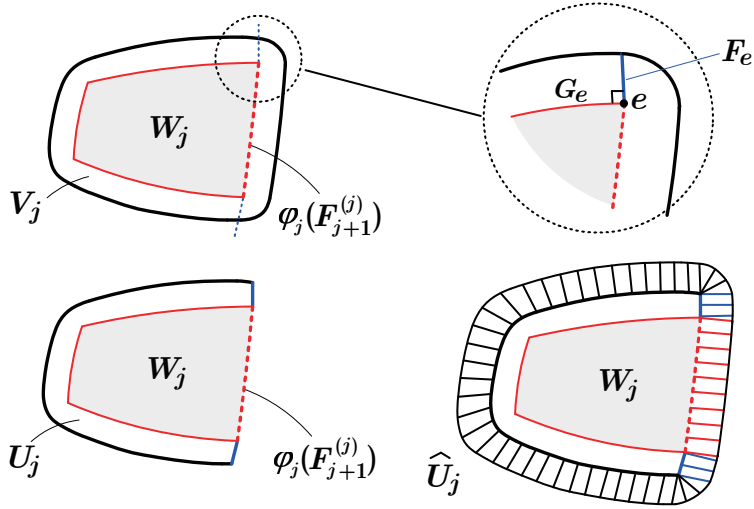}$}
                 }
           }
     \end{equation*}
   \caption{A domain $U_j$ containing $W_j$
       } \label{p-Neighborhood}
   \end{figure}
 
 From the above assumptions, we construct a pseudo-diffeomorphism 
 $\varphi_{j+1}$ from $Y^{(j+1)}$ to 
 some mean curvature convex Riemannian polyhedron 
in a Riemannian $3$-manifold with positive scalar curvature that satisfies (a),(b) and (c).
 First of all, define
   $$ V_j := \{ x\in M_j \,;\, \mathrm{dist}(x, W_j) \leq \delta  \},\ \delta \ll 1; $$
 
  By~\cite[Theorem 1.1]{Alem85}, we can multiply a smooth function $f:\R \rightarrow \R$ to 
 some components of the metric $g_j|_{W_{j}}$ and obtain a new metric $g'_j$ with positive scalar curvature
   on $W_j$
   so that the mean curvature of $g'_j$ at every facet of $W_j$ is positive. So without loss of generality, we can just assume that the mean curvature of $g_j$ at every facet of $W_j$ is positive at the beginning.
 Then for a sufficiently small $\delta$, it is not hard to show that $V_j$ is a mean curvature convex domain in $(M_j,g_j)$.\n

 Let $e$ be an edge on boundary of $\varphi_j ( F^{(j)}_{j+1} )$. We can write $e=G_e\cap \varphi_j ( F^{(j)}_{j+1})$ where $G_e$ is a facet of $W_j$.  
 Let $F_e$ be the union of the geodesic segments 
 in $V_j$ emanating from the points of $e$ that are 
 orthogonal to $G_e$ (see Figure~\ref{p-Neighborhood}).
 Then the dihedral angles between $F_e$ and the facets 
 in $\varphi_j ( F^{(j)}_{j+1})$ and $\partial V_j$ are all less than $\pi$. \n

 If we cut off an open subset from $V_j$ along $\varphi_j ( F^{(j)}_{j+1} )$ and
  these $F_e$'s,  we obtain a compact polyhedral domain $U_{j}$ containing $W_j$ where
$\partial U_j\cap 
  \partial W_j = \varphi_j ( F^{(j)}_{j+1} )$ (see Figure~\ref{p-Neighborhood}).
  Clearly, $U_j$ is also mean curvature convex.
  Moreover, define
   $$ \widehat{U}_j := \{ x\in M_j \,;\, \mathrm{dist}(x, U_j) \leq \delta  \},\ \delta \ll 1; $$
   \begin{align*}
    \widehat{W}_j := W_j \, \cup\, & \big\{\text{geodesic segments  in $\widehat{U}_j$ emanating orthogonally} \\
    &\text{\quad from faces in}\ \varphi_j ( F^{(j)}_{j+1} )\big\}.
    \end{align*}
    So $\widehat{U}_j\backslash U_j$ is a thin collar of
   $\partial \widehat{U}_j$.
   Then consider
   \begin{align} 
     D(U_j) &:= \big\{ p\in \widehat{U}_j\times [-1,1]\,;\, \mathrm{dist}\big(p, U_j \times \{ 0\}\big) =\varepsilon \big\}, \ 0<\varepsilon < \delta\slash 2, \label{Equ-D-Uj} \\
         D(W_j) &:= \big\{ p\in \widehat{W}_j\times [-1,1]\,;\, \mathrm{dist}\big(p, W_j \times \{ 0\}\big) =\varepsilon \big\}, \ 0<\varepsilon < \delta\slash 2. \label{Equ-D-Wj}
     \end{align}

   \begin{figure}
        \begin{equation*}
        \vcenter{
            \hbox{
                  \mbox{$\includegraphics[width=0.9\textwidth]{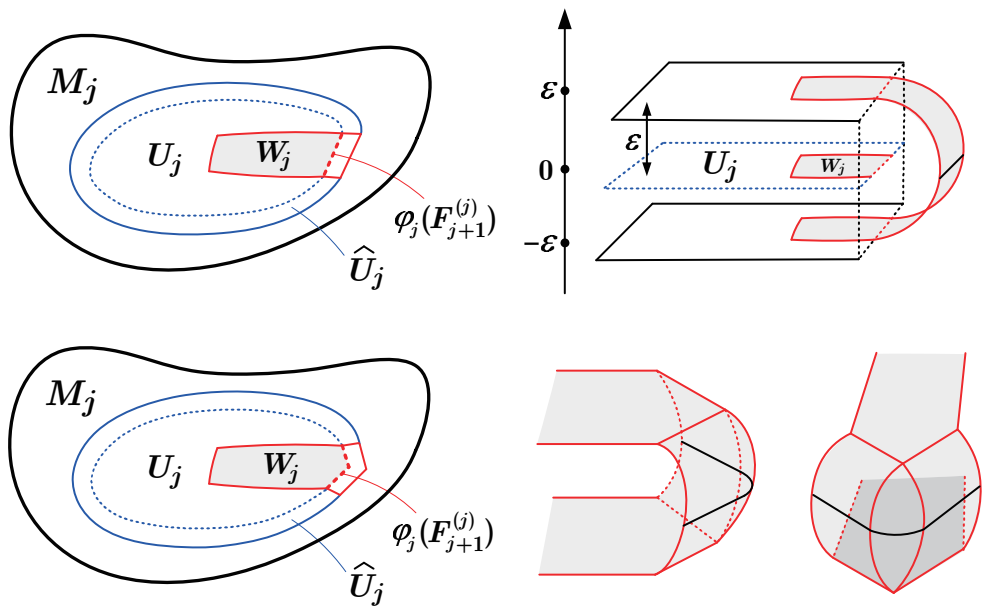}$}
                 }
           }
     \end{equation*}
   \caption{$D(U_j)$ and $D(W_j)$
       } \label{p-Double}
   \end{figure} 
       
   It is easy to see that $D(U_j)$ is homeomorphic to the double of $U_j$ along $\partial U_j$, and $D(W_j)$ is homeomorphic to the double of $W_j$ along $\varphi_j(F^{(j)}_{j+1})$ (see Figure~\ref{p-Double}). Hence $D(W_j)$ is homeomorphic
 to $Y^{(j+1)}$. More precisely, there exists a homeomorphism 
 $$\varphi_{j+1}: Y^{(j+1)}\rightarrow D(W_j)$$
  which maps a normal neighborhood $N(F^{(j+1)}_{j+1})$ of $F^{(j+1)}_{j+1}$ in $Y^{(j+1)}$ onto the bending region of $D(W_j)$ and maps the two components of
 $Y^{(j+1)}\backslash N(F^{(j+1)}_{j+1})$ onto the two copies of $W_j$ in $D(W_j)$ that are parallel to $W_j\times \{0\}$. Moreover, by our assumption that
 $\varphi_j: Y^{(j)}\rightarrow W_j$ is a pseudo-diffeomorphism, 
  we can extend $\varphi_{j+1}$ to a diffeomorphism from an open neighborhood of $Y^{(j+1)}$ in $\R\mathcal{Z}_P$ to an open neighborhood of $D(W_j)$ in $D(U_j)$. So $\varphi_{j+1}$ is also a pseudo-diffeomorphism.
   \n
 
 Here we remark that if the dihedral angles of $W_j$ were greater than $\pi$ at an edge in
   $\varphi_j(F^{(j)}_{j+1})$, then $D(U_j)$ and
   $D(W_j)$ would have some holes in the bending region. This is the reason why we need to assume $W_j$ to satisfy the condition (c). \n
   
   Geometrically, $D(U_j)$ and $D(W_j)$ will have some codimension-one creases where the induced metric from
   $\widehat{U}_j\times [0,1]$ is not smooth. In addition,   
   the facets in $\varphi_j(F^{(j)}_{j+1})\subset \partial W_j$ may have different types of local configurations which will cause different shapes of
  $D(W_j)$ in the bending region (see Figure~\ref{p-Double}). In particular in dimension $3$, there are possibly three types of local configurations of facets in $\varphi_j(F^{(j)}_{j+1})$ and $\partial U_j$ (see Figure~\ref{p-Configuration}). \n
  
     \begin{figure}
        \begin{equation*}
        \vcenter{
            \hbox{
                  \mbox{$\includegraphics[width=0.53\textwidth]{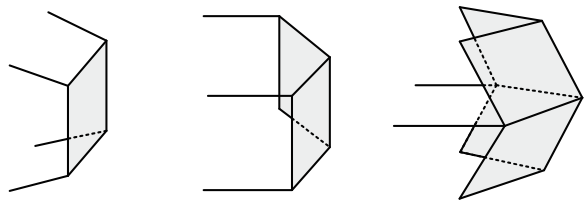}$}
                 }
           }
     \end{equation*}
   \caption{Local configurations of facets of $W_j$ in dimension $3$
       } \label{p-Configuration}
   \end{figure}

    Next, we use the (slightly generalized) argument in~\cite[Theorem 5.7]{GromovLawson80} to
  obtain a Riemannian metric on
   $D(U_j)$ with positive scalar curvature
    when $\varepsilon$ is sufficiently small. Let
     \begin{itemize}
     \item $ M_{j+1}= D(U_j)$ with the creases smoothed out;\n
     \item $g_{j+1}=$ the metric on $M_{j+1}$
     induced from the product metric $g_j \times g_{[-1,1]}$
     
      \quad\ \ \, on $U_j\times [-1,1]$.       
     \end{itemize}
     \n
     
 Using~\cite[Theorem 1.1]{Alem85} again, we can assume that
 the mean curvature of $g_j$ at every facet of $U_j$ is positive.   
 Then similarly to the proof of~\cite[Theorem 5.7]{GromovLawson80}, we can compute the scalar curvature of $g_{j+1}$ by estimating the principal curvatures of $D(U_j)$ in $U_j\times [-1,1]$ as follows. \n

  On the region parallel to $U_j\times \{0\}$ in $\widehat{U}_j\times [-1,1]$, the scalar curvature of  $g_{j+1}$ on $D(U_j)$ is clearly positive. So the difficulty comes from the bending region of $D(U_j)$. 
   The following argument is
  parallel to the argument in
   the proof of Gromov-Lawson~\cite[Theorem 5.7]{GromovLawson80}. 
    Let $x$ be an arbitrary point
    on $\partial U_j$. We have the following three cases
   according to where $x$ lies.\n   
   
   \begin{itemize}
    \item[(i)]  If $x$ is the relative interior of a facet $E$ on $\partial U_j$, then there is a unique normal direction of $\partial U_j$ at $x$.
    Let $\sigma_x$ be the geodesic segment in $\widehat{U}_j$ emanating orthogonally from $\partial U_j$ at $x$. Then $\sigma_x \times [-1,1]$ is totally geodesic in $\widehat{U}_j \times [-1,1]$. 
   The intersection of $\sigma_x \times [-1,1]$ with
   $D(U_j)$ is of the form shown in Figure~\ref{p-Bend}.   
   Let $\mu_1,\mu_2$
   be the principal curvatures of $\partial U_j$ at $x$. 
   Then at a point $p$ corresponding to the angle $\theta\in (-\pi\slash 2, \pi\slash 2)$, the principal curvatures of $D(U_j)$ will be of the form
   \begin{align*}
    \quad \  \lambda_0 = \frac{1}{\varepsilon} \cos\theta &+ O(\varepsilon), \ \ \lambda_1= \big(\mu_1+O(\varepsilon)\big)\cos\theta+ O(\varepsilon^2), \\
  & \lambda_2= \big(\mu_2+O(\varepsilon)\big)\cos\theta+ O(\varepsilon^2).
   \end{align*}   
   
 Let $\widehat{\kappa}$ be the scalar curvature function of $\widehat{U}_j$ (and $\widehat{U}_j\times [-1,1]$).  
 The by the Gauss equation, the scalar curvature $\kappa$ of
 $g_{j+1}$ at $p$ is of the form
   $$ \kappa = \widehat{\kappa} + \Big( \frac{2}{\varepsilon} H + O(1) \Big) \cos^2\theta + O(\varepsilon)$$
  where $H=\mu_1+\mu_2$ is the mean curvature of  
  $\partial U_j$ at $x$.  By our assumption
   of $g_j$, we have $\widehat{\kappa}>0$ and $H>0$. So we have $\kappa>0$ when $\varepsilon$ is sufficiently small. 
   \n
   
     \begin{figure}
        \begin{equation*}
        \vcenter{
            \hbox{
                  \mbox{$\includegraphics[width=0.9\textwidth]{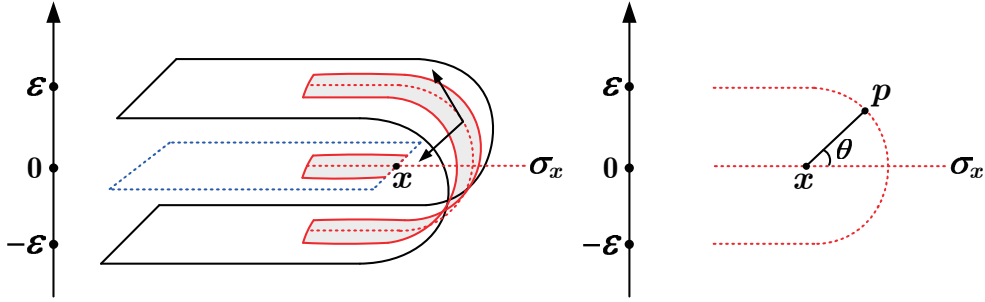}$}
                 }
           }
     \end{equation*}
   \caption{The bending region of $D(U_j)$
   parametrized by one angle
       } \label{p-Bend}
   \end{figure}

   \item[(ii)] If $x$ is the relative interior of an edge  
   $E \cap E'$ where $E$ and $E'$ are two facets on
   $\partial U_j$, the normal direction of $\partial U_j$ at $x$ in $\widehat{U}_j$ is not unique.
   Let $\sigma_x$ and $\sigma'_x$ be the geodesic segments
   orthogonal to $E$ and $E'$ at $x$, respectively.   
  Let $\Sigma^{(E,E')}_x$ be the union of the geodesic segments in $\widehat{U}_j$ orthogonal to $E\cap E'$ at $x$ which is bounded $\sigma_x$ and $\sigma'_x$. So 
  $\Sigma^{(E,E')}_x$ looks like a fan  with fan angle $\pi-\angle (E,E')_x$. 
    Note that $\angle (E,E')_x \in (0,\pi]$ by the condition (b) and (c) of $W_j$. Moreover, $x$ determines an oval-shaped patch (see Figure~\ref{p-Fan}) in the intersection of $\Sigma^{(E,E')}_x \times [-1,1]$ and $D(U_j)$.
   Let $\mu_1,\mu_2$
   be the principal curvatures of $E$ at $x$.    
   At a point $p$ corresponding to the angles
   $ \phi \in [0, \pi-\angle (E,E')_x]$ and $\theta\in (-a_{\phi}, a_{\phi})$ where $0<a_{\phi} \leq \pi\slash 2$ is determined by $\phi$,
     the principal curvatures of $D(U_j)$ will be of the form (see Figure~\ref{p-Fan}): 
   \begin{align*}
    \qquad \quad  \lambda_0 = \frac{1}{\varepsilon} \cos\phi
    \cos&\, \theta  + O(\varepsilon), \ \ \lambda_1= \big(\mu_1+O(\varepsilon)\big)\cos\phi \cos\theta + O(\varepsilon^2), \\
  & \lambda_2= \big(\mu_2+O(\varepsilon)\big)\cos\phi
  \cos\theta + O(\varepsilon^2).
   \end{align*}
    So we have  
    $$ \kappa = \widehat{\kappa} + \Big( \frac{2}{\varepsilon} H + O(1) \Big) \cos^2\phi\cos^2\theta + O(\varepsilon).$$
   By our assumption of $\widehat{\kappa}$ and $H$, we have $\kappa>0$ when $\varepsilon$ is sufficiently small.\n
        \begin{figure}
        \begin{equation*}
        \vcenter{
            \hbox{
                  \mbox{$\includegraphics[width=0.57\textwidth]{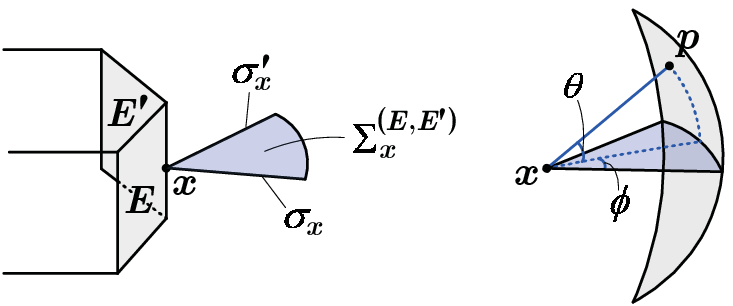}$}
                 }
           }
     \end{equation*}
   \caption{The bending region of $D(U_j)$
   parametrized by two angles
       } \label{p-Fan}
   \end{figure}
   \begin{figure}
        \begin{equation*}
        \vcenter{
            \hbox{
                  \mbox{$\includegraphics[width=0.75\textwidth]{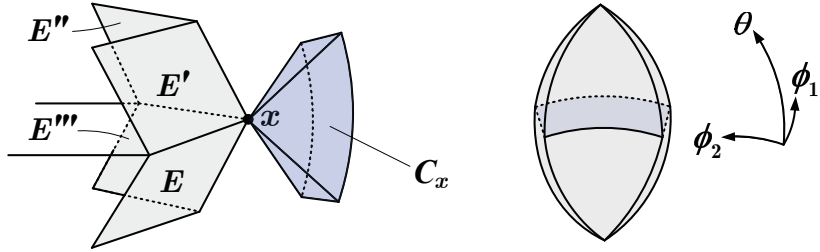}$}
                 }
           }
     \end{equation*}
   \caption{The bending region of $D(U_j)$
   parametrized by three angles
       } \label{p-Cone}
   \end{figure}

   \item[(iii)] If $x=E \cap E' \cap E''\cap E'''$ is a vertex where $E$, $E'$, $E''$ and $E'''$ are four facets 
   on $\partial U_j$, 
   the geodesic segments in $\widehat{U}_j$ emanating  from $x$ determine a cone $C_x$ which is bounded by the fans
  $\Sigma^{(E,E')}_x$, $\Sigma^{(E',E'')}_x$, $\Sigma^{(E'',E''')}_x$ and $\Sigma^{(E''',E)}_x$ (see Figure~\ref{p-Cone}). Moreover, $x$ determines a football-shaped region
  in the intersection of $C_x\times [-1,1]$ and
  $D(U_j)$ which is parametrized by three angles
  $\phi_1,\phi_2$ and $\theta$, where 
  $ \phi_1 \in [0, \pi-\angle (E,E')_x]$, $
  \phi_2 \in [0, \pi-\angle (E,E'')_x]$ and $\theta\in (-a_{\phi_1,\phi_2} , a_{\phi_1,\phi_2})$, where $0<a_{\phi_1,\phi_2}\leq \pi\slash 2$ is determined by
  $\phi_1$ and $\phi_2$.  
   Similarly to the previous cases, for a point corresponding to $\phi_1,\phi_2$ and $\theta$, we obtain
  \[\quad \kappa = \widehat{\kappa} + \Big( \frac{2}{\varepsilon} H + O(1) \Big) \cos^2\phi_1\cos^2\phi_2\cos^2\theta + O(\varepsilon).\]
   By our assumption of $\widehat{\kappa}$ and $H$, we have $\kappa>0$ when $\varepsilon$ is sufficiently small.
   \end{itemize}
   \n

  So in all cases, we have $\kappa >0$.
   By  the above discussion,
  the bending region of $D(U_j)$ can be written as $A_1\cup A_2\cup A_3$
  where $A_1$, $A_2$ and $A_3$ consist of the points parametrized by one, two and three angles, respectively. Besides, let $A_0$ be the region of $D(U_j)$ that is parallel to $U_j\times \{0\}$.
 There are some obvious creases at the intersections of  $A_0$, $A_1$, $A_2$ and $A_3$ where the metric $g_{j+1}$ does not have continuous second derivatives. But by  some small perturbations of $D(U_j)$, we can smooth out these
  creases so that the condition $\kappa>0$ still holds.
  The local model of the smoothing is given by the Cartesian product of an open subset of the quadrant $\R^2_{\geq 0}$ with the curve $\gamma$ shown in Figure~\ref{p-Smoothing} where 
  $\gamma$ is smooth except at $t=0$. 
  For every point $x\in \R^2_{\geq 0}$, we deform $\{x\}\times \gamma$ to be 
  $\{x\}\times \widetilde{\gamma}$, where $\widetilde{\gamma}$ is 
  an everywhere smooth curve that differs from $\gamma$ only in a small interval $0 < t < \delta$ for some $\delta\ll 1$ (see Figure~\ref{p-Smoothing}).
   After smoothing the creases,
  $D(W_j)$ becomes a Riemannian polyhedron
  in $(M_{j+1},g_{j+1})$.  \n
  
    \begin{figure}
        \begin{equation*}
        \vcenter{
            \hbox{
                  \mbox{$\includegraphics[width=0.72\textwidth]{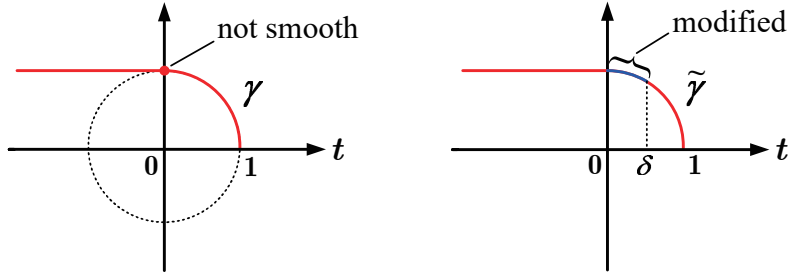}$}
                 }
           }
     \end{equation*}
   \caption{Smoothing a curve $\gamma$ near $t=0$
       } \label{p-Smoothing}
   \end{figure}

   Next, let us verify that $D(W_j)$ satisfies the conditions (b) and (c). By the definition of $\widehat{W}_j$ and $D(W_j)$, any facet $G$ of $D(W_j)$ in the bending region is of the following form:
   $$ G= \big( I_{E\cap F}\times [-1,1] \big) \cap D(W_j) $$
   where $E, F$ are two facets of $W_j$ with $E\subseteq
   \varphi_j(F^{(j)}_{j+1})$ and $F\nsubseteq \varphi_j(F^{(j)}_{j+1})$, and $I_{E\cap F}$ 
   is union of the geodesic segments in $\widehat{U}_j$ orthogonal to $E$ that emanate from points in $E\cap F$ (see Figure~\ref{p-Special-Facet}).      
     This implies:
     
     \begin{itemize}
      \item $G$ is a totally geodesic facet of $D(W_j)$.\n
    \item   
   $\angle (\widetilde{E}_{\pm \varepsilon},G) =\pi\slash 2$
   where $\widetilde{E}_{\pm \varepsilon}$ is the parallel copy of $E$ in $W_j\times \{\pm \varepsilon\}$.
   \end{itemize} 
   
    \begin{figure}
        \begin{equation*}
        \vcenter{
            \hbox{
                  \mbox{$\includegraphics[width=0.68\textwidth]{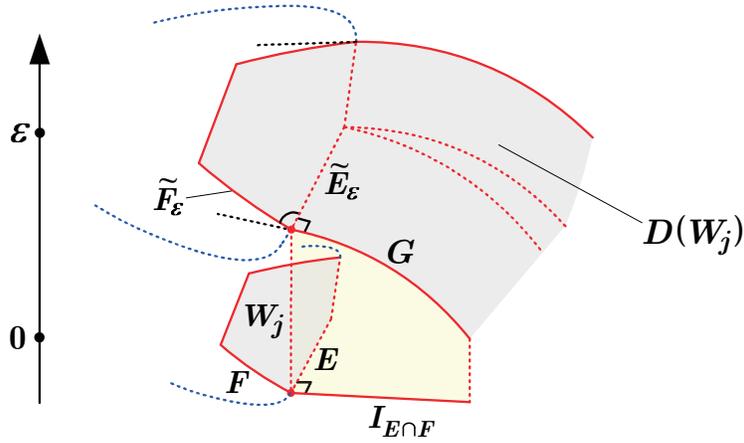}$}
                 }
           }
     \end{equation*}
   \caption{Facets of $D(W_j)$ in the bending region
       } \label{p-Special-Facet}
   \end{figure}
  
  Let $\widetilde{F}_{\pm \varepsilon}$ be the parallel copy of $F$ in $W_j\times \{\pm \varepsilon\}$.  By the condition (b) of $W_j$, the dihedral angles $\angle (\widetilde{E}_{\varepsilon},\widetilde{F}_{\varepsilon})$ and $\angle (\widetilde{E}_{- \varepsilon},\widetilde{F}_{- \varepsilon})$ are both non-obtuse.  So
   \begin{equation} \label{Equ-Dih-Angle}
     \angle (G,\widetilde{F}_{\pm \varepsilon}) \in (0,\pi].
     \end{equation}
    
   If an edge $L$ of $D(W_j)$ is not contained in the two parallel copies of $\varphi_j(F^{(j)}_{j+1})$ in $W_j\times \{\pm 1\}$,
    the dihedral angles of $D(W_j)$ at $L$ will agree with 
    the dihedral angles of the counterpart of $L$ in
    $W_j\times \{0\}$. These include all the Type-I edges 
   of $D(W_j)$.   Then since $W_j$ satisfies the condition (b), so does $D(W_j)$. \n
 
  All the Type-II edges of $D(W_j)$ that lie in the two parallel copies of $\varphi_j(F^{(j)}_{j+1})$ in $W_j\times \{\pm \varepsilon\}$ are:
   $$\big\{ G\cap \widetilde{F}_{\pm \varepsilon}\,;\, F\nsubseteq \varphi_j(F^{(j)}_{j+1}),\, G\ \text{is a facet of}\ D(W_j) \ \text{in the bending region} \big\}.$$
  It follows from~\eqref{Equ-Dih-Angle} that $D(W_j)$ 
  satisfies the condition (c) at these edges. 
   At other
   Type-II edges, $D(W_j)$ also satisfies the condition (c)  by our assumption of $W_j$.\n
   
   Moreover, the conditions (b) and (c) still hold for $D(W_j)$ after we smooth the creases of $D(U_j)$ since
   the smoothing does not change the tangent spaces 
   of the facets of $D(U_j)$ incident to the creases.\n

  In addition, $D(W_j)$ is mean curvature convex
   in $(M_{j+1},g_{j+1})$
   since its facets in the bending region are all totally geodesic while its facets outside the bending region have the same mean curvatures
   as their counterparts in $W_j\times \{0\}$.     
   \n
  So from the above arguments, we obtain a pseudo-diffeomorphism $\varphi_{j+1}$ from
  $Y^{(j+1)}$ to $W_{j+1}:=D(W_j)$ where
  $W_{j+1}$ is a mean curvature convex Riemannian polyhedron  in 
   $(M_{j+1},g_{j+1})$ with positive scalar curvature and
   $W_{j+1}$ satisfies the conditions (b) and (c).
  This finishes the induction. \n
  
  Observe that when $j=m-1$, 
  $\partial U_{m-1} = \partial W_{m-1} =\varphi_{m-1}(F^{(m-1)}_{m})$. This implies $W_{m-1}=U_{m-1}$. So by our doubling construction~\eqref{Equ-D-Wj}, the pseudo-diffeomorphism 
  $\varphi_m : Y^{(m)} = \R\mathcal{Z}_P \rightarrow W_m = D(W_{m-1})$
  is actually a diffeomorphism,
  where $W_m$ is a compact
   Riemannian $3$-manifold with positive scalar curvature.
   Then by Theorem~\ref{Thm-WuYu21}, $P$ must be combinatorially equivalent to a convex polytope that can be obtained from $\Delta^3$ by a sequence of vertex cuts.  So we finish the proof of the ``only if'' part and hence the whole theorem.
   \qed

\n
\begin{rem}
  In the proof of the ``only if'' part of Theorem~\ref{Thm-Main-1}, there is a canonical action of $H_j\cong (\Z_2)^j$ on both $Y^{(j)}$
  and $W_j$, $1\leq j \leq m$. Indeed, we can define the action of $H_j$ on $W_j$ inductively through the doubling construction $D(W_{j-1})$.
 From our definition of the pseudo-diffeomorphism $\varphi_j: Y^{(j)}\rightarrow W_j$, it is easy to see that $\varphi_j$
  is equivariant with respect to the canonical $H_j$-actions for all $1\leq j \leq m$.
\end{rem}

   \begin{cor} \label{Cor-Constant-SC}
    The statements of Theorem~\ref{Thm-Main-1} and Corollary~\ref{Cor-Main-2} still hold if we assume that the ambient
     Riemannian $3$-manifold in these two theorems has
      positive constant scalar curvature. 
  \end{cor}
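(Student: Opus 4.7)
The plan is to strengthen the construction in the ``if'' part of the proof of Theorem~\ref{Thm-Main-1} by a conformal deformation, and to observe that the ``only if'' direction is automatic. If the ambient Riemannian $3$-manifold has positive constant scalar curvature then in particular it has positive scalar curvature, so Theorem~\ref{Thm-Main-1} and Corollary~\ref{Cor-Main-2} immediately give the required combinatorial restriction. It therefore suffices to upgrade the realization constructed in the ``if'' part to one whose ambient metric has positive constant scalar curvature.

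Starting from a simple convex $3$-polytope $P$ combinatorially equivalent to one obtained from $\Delta^3$ by a sequence of vertex-cuts, Theorem~\ref{Thm-WuYu21} furnishes a $(\Z_2)^m$-invariant Riemannian metric $g_0$ of positive scalar curvature on $\R\mathcal{Z}_P$. The plan is to solve the equivariant Yamabe problem in the conformal class of $g_0$: minimize the Yamabe functional $Y(u)=\bigl(\int(8|\nabla u|_{g_0}^2+R_{g_0}u^2)\,dV_{g_0}\bigr)\big/\bigl(\int u^6\,dV_{g_0}\bigr)^{1/3}$ restricted to positive $(\Z_2)^m$-invariant functions $u\in H^1(\R\mathcal{Z}_P)$. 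Because $R_{g_0}>0$ and constants are invariant, this restricted Yamabe invariant is strictly positive; existence of an invariant minimizer follows from the equivariant version of the Yamabe problem (compare Hebey--Vaugon), the point being that $(\Z_2)^m$ is finite and that concentration of a minimizing sequence would have to occur simultaneously at every point of a nontrivial orbit, which raises the effective Sobolev threshold by a factor of $|(\Z_2)^m|^{2/3}$ and forces compactness. The resulting metric $\widetilde{g}=u^4 g_0$ is $(\Z_2)^m$-invariant and has positive constant scalar curvature.

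It then remains to check that $P$ is still realized as a right-angled totally geodesic Riemannian polyhedron in $(\R\mathcal{Z}_P,\widetilde{g})$. Because $u$ is $(\Z_2)^m$-invariant, the $(\Z_2)^m$-action is still isometric with respect to $\widetilde{g}$, so each $\Theta_P^{-1}(F_i)$, being the fixed-point locus of the generator $e_i$, remains a disjoint union of totally geodesic submanifolds by \cite[Theorem~1.10.15]{Kling95}. Moreover, conformal changes of Riemannian metrics preserve the angle between any two tangent vectors, hence the dihedral angles between any intersecting components of $\Theta_P^{-1}(F_i)$ and $\Theta_P^{-1}(F_j)$ remain equal to $\pi/2$. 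This simultaneously establishes the ``if'' part of Corollary~\ref{Cor-Main-2} under the stronger constant-curvature hypothesis; since a right-angled totally geodesic polyhedron is in particular mean curvature convex with non-obtuse dihedral angles, Theorem~\ref{Thm-Main-1} holds under the same stronger hypothesis as well.

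The step I expect to be the main obstacle is producing a $(\Z_2)^m$-invariant Yamabe minimizer rather than merely any Yamabe minimizer. The Aubin--Schoen solution of the Yamabe problem yields a constant scalar curvature metric in every conformal class, but uniqueness can fail (most dramatically on the round sphere), so an invariance argument by uniqueness is unavailable in general. Carrying out the direct variational argument inside the subspace of $(\Z_2)^m$-invariant $H^1$-functions, together with the equivariant bubble analysis mentioned above, is what makes the upgrade from positive to positive constant scalar curvature routine in our specific setting; all subsequent verifications (invariance of $\widetilde{g}$, totally geodesic facets, right dihedral angles) are formal consequences of working in an isometric conformal deformation.
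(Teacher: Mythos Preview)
Your proposal is correct and follows essentially the same approach as the paper: both invoke the equivariant Yamabe problem (Hebey--Vaugon) to conformally deform the $(\Z_2)^m$-invariant metric $g_0$ on $\R\mathcal{Z}_P$ to one of positive constant scalar curvature, and then observe that the realization of $P$ as a right-angled totally geodesic polyhedron persists. Your write-up is more detailed than the paper's---you spell out the variational mechanism behind the equivariant Yamabe solution and explicitly note that conformal changes preserve angles---but the underlying argument is the same.
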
 
  \begin{proof}
    By the solution of the equivariant Yamabe problem in Hebey-Vaugon~\cite{HebVau93}, if $\R\mathcal{Z}_P$ has a
    $(\Z_2)^m$-invariant
    Riemannian metric $g_0$ with positive scalar curvature, then there exists a $(\Z_2)^m$-invariant Riemannian metric $\overline{g}_0$ conformal to $g_0$
    on $\R\mathcal{Z}_P$ which has positive constant scalar curvature. So
    we can prove this corollary by applying the same proof of the ``if'' part of Theorem~\ref{Thm-Main-1} to $(\R\mathcal{Z}_P,\overline{g}_0)$.
  \end{proof}
  
\n
 
  \section{Examples in higher dimensions} \label{Sec-High-Dim}

   The argument of ``doubling-smoothing'' of Riemannian polyhedra in the proof of
     Theorem~\ref{Thm-Main-1} can be generalized to any dimension $n\geq 2$ (also see~\cite[\S 2.1]{Gromov14}). 
     Indeed, the only new ingredient in the proof of the higher dimensions is: there are more types of local configurations of facets on the boundary of the polyhedra in our iterated doubling construction.      
     So     
  if an $n$-dimensional simple convex polytope $P$ in $\R^n$ can be realized
   as a mean curvature convex Riemannian polyhedron 
   with non-obtuse dihedral angles in a Riemannian $n$-manifold with positive scalar curvature, then we can construct a Riemannian metric on $\R\mathcal{Z}_P$
    with positive scalar curvature.
  Note that the iterated doubling construction induces
    a smooth structure on $\R\mathcal{Z}_P$ from
    $P$ which is equivariant with respect to the canonical $(\Z_2)^m$-action. Then since the equivariant smooth structure on
    $\R\mathcal{Z}_P$ is known to be unique up to equivariant diffeomorphisms 
    (see~\cite[Proposition 3.8]{KurMasYu15}), this smooth structure on $\R\mathcal{Z}_P$ should agree with
    the smooth structure on $\R\mathcal{Z}_P$ determined 
    by the embedding $i_{\mathcal{Z}}: \R\mathcal{Z}_P\rightarrow \R^m$ (see Section~\ref{Sec-Real-MAM}).    
   But unfortunately, we do not have the analogue
   of Theorem~\ref{Thm-WuYu21} for $\R\mathcal{Z}_P$ in higher dimensions.
   So we cannot determine all possible combinatorial types of $P$ in dimension greater than $3$.\n
   
   On the other hand, we can construct many examples of such kind of simple convex polytopes $P$ in arbitrarily high dimensions as follows.
 Let us first quote some well known results on the existence of Riemannian metrics with positive scalar curvature.

       \begin{thm}[see Gromov-Lawson~\cite{GromovLawson80-2} and Schoen-Yau~\cite{SchYau79-2}] \label{Thm-Surgery}
 Let $N$ be a closed manifold which admits a Riemannian metric with positive scalar curvature.
If $M$ is a manifold that is obtained from $N$ by surgery in codimension $\geq 3$, then $M$ also admits a Riemannian metric with positive scalar curvature.
\end{thm}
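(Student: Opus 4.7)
The plan is to follow the Gromov--Lawson construction, producing a positive-scalar-curvature metric on $M$ by an explicit geometric deformation of the metric on $N$, using essentially the same principal-curvature estimates that appear in cases (i)--(iii) of the proof of Theorem~\ref{Thm-Main-1}. Write the surgery data as an embedded sphere $S^p\subset N$ with trivialized normal bundle, so that some tubular neighborhood is diffeomorphic to $S^p\times D^{n-p}$ with $n-p\ge 3$, and $M$ is obtained by removing $S^p\times D^{n-p}(r_0)$ and gluing in $D^{p+1}\times S^{n-p-1}$.

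First I would pass to the product $(N\times \R,\, g+dt^2)$ and regard $N$ as the hypersurface $N\times\{0\}$. Inside the tubular neighborhood of $S^p\times\{0\}$, I would replace $N\times\{0\}$ by a new hypersurface $\widetilde{N}$ built by rotating a carefully chosen plane curve $\gamma$ in the $(r,t)$-plane, where $r$ is the radial coordinate in the normal $D^{n-p}$-factor. The curve starts flat at $r=r_0$, bends up into the $t$-direction, runs nearly parallel to the $t$-axis at some very small radius $r=\varepsilon$, and finally caps off. The rotation is by $S^{n-p-1}$ in the normal slices and trivial along $S^p$, so $\widetilde{N}$ is diffeomorphic to the surgered manifold $M$.

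Next I would compute the induced scalar curvature on $\widetilde{N}$ via the Gauss equation, in the same spirit as the curvature calculation in the proof of Theorem~\ref{Thm-Main-1}. In the bending region, $n-p-1$ of the principal curvatures of $\widetilde{N}$ are of order $1/\varepsilon$ (coming from the small $S^{n-p-1}(\varepsilon)$-factor), while the remaining principal curvatures are $O(1)$ or smaller. Writing
\[
\mathrm{Scal}_{\widetilde{N}} = \mathrm{Scal}_{N\times \R} - 2\,\mathrm{Ric}_{N\times \R}(\nu,\nu) + H^2 - |A|^2,
\]
the dominant contribution $(n-p-1)(n-p-2)/\varepsilon^2$ of $H^2 - |A|^2$ is positive precisely because $n-p-1\ge 2$; this is where the codimension hypothesis enters essentially. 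By choosing $\varepsilon$ sufficiently small, this term dominates any bounded negative contribution from $-2\,\mathrm{Ric}$, while the positivity of $\mathrm{Scal}_N$ handles the undeformed region.

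Finally I would smooth the corners of $\gamma$ and cap off the thin tube with a standard \emph{torpedo} metric on $D^{p+1}\times S^{n-p-1}$ that matches the product metric $g_{S^p}+\varepsilon^2\, g_{S^{n-p-1}}$ at the boundary and has positive scalar curvature by direct computation on the rotationally symmetric model. The main obstacle is maintaining the positivity estimate uniformly across the transition from the undisturbed region of $N$, through the bending, and into the cap: the profile curve $\gamma$ must be engineered so that at every stage the $1/\varepsilon^2$ contribution from the rounded normal directions dominates the terms involving the ambient Ricci curvature and the mean-curvature mismatch. This is exactly where $n-p-1\ge 2$ is indispensable, since $S^{n-p-1}(\varepsilon)$ would carry non-positive intrinsic scalar curvature if its dimension were $0$ or $1$, and the whole estimate would fail.
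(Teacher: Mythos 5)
The paper does not prove this statement at all: it is quoted verbatim as the classical surgery theorem of Gromov--Lawson and Schoen--Yau, with the proof deferred to \cite{GromovLawson80-2} and \cite{SchYau79-2}. Your sketch is a faithful outline of the Gromov--Lawson argument from exactly the cited source --- pushing $N\times\{0\}$ inside $(N\times\R, g+dt^2)$ to a hypersurface obtained by rotating a profile curve $\gamma$, reading off the induced scalar curvature from the Gauss equation, and observing that the leading term $(n-p-1)(n-p-2)\sin^2\theta/\varepsilon^2$ is where codimension $\ge 3$ is used --- so it matches the intended proof rather than offering a different route. The only caveat is the step you yourself flag as ``the main obstacle'': the choice of $\gamma$ so that the $O(1/\varepsilon)$ term involving the curvature $k$ of the profile curve never overwhelms the positive terms is the genuinely delicate part of Gromov--Lawson's argument (it required later corrections and careful re-expositions), and your proposal asserts rather than establishes that such a curve exists; for a complete proof you would need to carry out that bending lemma in detail, together with the transition to the product metric on $S^p\times S^{n-p-1}(\varepsilon)$ before gluing in the torpedo cap.
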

 
 Moreover, there is an equivariant version of Theorem~\ref{Thm-Surgery} for manifolds equipped with a compact Lie group action as follows.

  \begin{thm}[Theorem 11.1 in~\cite{Ber83}] \label{Thm-Surgery-Equiv}
 Let $M$ and $N$ be $G$-manifolds where $G$ is a compact Lie group.
Assume that $N$ admits an $G$-invariant metric of positive scalar curvature. If $M$
is obtained from $N$ by equivariant surgeries of codimension at least three, then $M$
admits a $G$-invariant metric of positive scalar curvature.
\end{thm}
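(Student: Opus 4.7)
The plan is to adapt the Gromov--Lawson surgery construction to the equivariant setting. Recall the non-equivariant strategy: given a PSC metric on $N$ and an embedded sphere $S^p\hookrightarrow N$ with trivial normal bundle of rank $q+1\geq 3$, one identifies a tubular neighborhood with $S^p\times D^{q+1}$ and produces a PSC metric on the surgered manifold by interpolating along a carefully designed curve in the $(t,r)$-plane (the ``Gromov--Lawson curve''). The key analytic fact, computed via the Gauss equation, is that a normal sphere of radius $r$ contributes $q(q-1)/r^2$ to the scalar curvature, and this positive term dominates the negative contributions coming from the bending precisely when $q\geq 2$, i.e.\ when the codimension is at least $3$.

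To upgrade this to an equivariant statement, I would first invoke the equivariant tubular neighborhood theorem, which is valid because $G$ is compact: the surgery locus is a $G$-invariant submanifold of $N$, its normal bundle is $G$-equivariant, and the equivariant surgery data supplies a $G$-equivariant identification of a neighborhood with a standard linear model on which $G$ acts through bundle automorphisms of the fibers. Starting from the given $G$-invariant PSC metric $g_N$, one observes that the radial distance function $r$ from the surgery locus is automatically $G$-invariant (since $G$ acts by isometries preserving the fibers), so the Gromov--Lawson curve, which depends only on $r$, induces a bending that is manifestly $G$-equivariant. Any further auxiliary choices (cutoff functions, interpolation parameters) can be averaged over $G$ with respect to its Haar measure to preserve $G$-invariance without affecting the curvature estimates qualitatively.

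The main obstacle, and the step requiring the most care, is arranging within the $G$-equivariant category that the initial metric has the product (or warped product) form near the surgery locus on which the Gromov--Lawson curvature estimates rely. I would handle this by a $G$-equivariant deformation: using the slice theorem and a $G$-invariant partition of unity, deform $g_N$ through $G$-invariant metrics (keeping positive scalar curvature throughout by taking the deformation small and checking that the scalar curvature, a continuous function, stays positive on the compact manifold) to one that, in a $G$-invariant tubular neighborhood of the surgery locus, splits as a warped product of the submanifold metric with the standard Euclidean fiber metric. Once this normal form is achieved, the pointwise scalar-curvature computation of Gromov--Lawson applies verbatim, and every modification made is by $G$-invariant data, so the resulting metric on $M$ is the desired $G$-invariant metric of positive scalar curvature.
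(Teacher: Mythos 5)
The paper does not prove this statement: it is quoted verbatim as Theorem~11.1 of B\'erard Bergery's paper, whose proof is exactly the observation you open with, namely that the Gromov--Lawson surgery construction is canonical in the given data (the invariant metric, the invariant surgery sphere, the equivariant tubular neighborhood and the radial distance function) and therefore commutes with the isometric action of the compact group $G$. So your overall strategy is the right one and matches the cited source.

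There is, however, a genuine gap in your ``main obstacle'' paragraph. You propose to \emph{first} deform $g_N$ through $G$-invariant PSC metrics to one that is a warped product near the surgery locus, justifying positivity ``by taking the deformation small.'' This step is neither justified nor how the argument actually runs. In Fermi coordinates an arbitrary metric agrees with the product model only to first order along the submanifold; the discrepancy at second order is governed by curvature, so forcing the metric into product form on a tubular neighborhood is not a $C^2$-small perturbation (and shrinking the neighborhood makes the cutoff derivatives blow up). If such a preliminary normalization were available cheaply, most of the delicacy of the Gromov--Lawson curve would be unnecessary. The correct order is the reverse: one bends the \emph{original} metric into $N\times\R$ along the curve $\gamma$, with all error terms from the non-product nature of $g_N$ absorbed into the $O(1)$ and $O(\varepsilon)$ terms of the second-fundamental-form estimates; the metric becomes close to a product $g_{S^p}+\varepsilon^2 g_{S^q}$ only at the end of the resulting neck, and it is the $q(q-1)/\varepsilon^2$ term (available precisely because $q\ge 2$) that then lets one homotope the neck metric to the standard one through PSC metrics. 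Relatedly, your sketch stops before the surgery is complete: after the bending you must still glue in the handle ($G\times_H(D^{p+1}\times S^q)$ in the equivariant setting) carrying a $G$-invariant PSC metric matching the standardized neck, and this handle-attachment step --- the part that was incompletely treated even in the original Gromov--Lawson paper --- is not addressed in your proposal.
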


It is shown in Bosio-Meersseman~\cite[Lemma 2.3]{BosMeer06} that up to combinatorial equivalence
   any $n$-dimensional simple convex polytope $P$ can be obtained from the $n$-simplex by a finite number of flips
   at some proper faces.
 Let $f$ be a proper face of $P$ which is combinatorially equivalent to the $k$-simplex $\Delta^k$.  
 Roughly speaking, the \emph{flip of $P$ at $f$}
   gives us a new polytope, denoted by $\mathrm{flip}_{f}(P)$, which is obtained by cutting off a small neighborhood $N(f)$ of $f$ from $P$
   and a neighborhood $N(\Delta^k)$ of
    $\Delta^k$ from $\Delta^n$, and then gluing $P\backslash N(f)$ and $\Delta^n\backslash N(\Delta^k)$ together along their cutting sections and merging the nearby facets (see~\cite{McM93,Tim99} or~\cite{BosMeer06} for the precise definition). For example, Figure~\ref{p-Flip} shows the flip of a $3$-dimensional simple convex polytope at a vertex and at an edge, respectively.
 Note that doing a flip on $P$ at a vertex $v$ is equivalent to cutting off $v$ from $P$, which will increase the number of facets by one. But whenever $\dim(f)>0$, the number of facets of $\mathrm{flip}_{f}(P)$ will be equal to that of $P$.  
In addition, a flip of $P$ at a face corresponds to
  a \emph{bistellar move} on $\partial P^*$ where $P^*$ is the dual simplicial polytope of $P$ (see~\cite{McM93,BosMeer06}).\n
  
   \begin{figure}
        \begin{equation*}
        \vcenter{
            \hbox{
                  \mbox{$\includegraphics[width=0.98\textwidth]{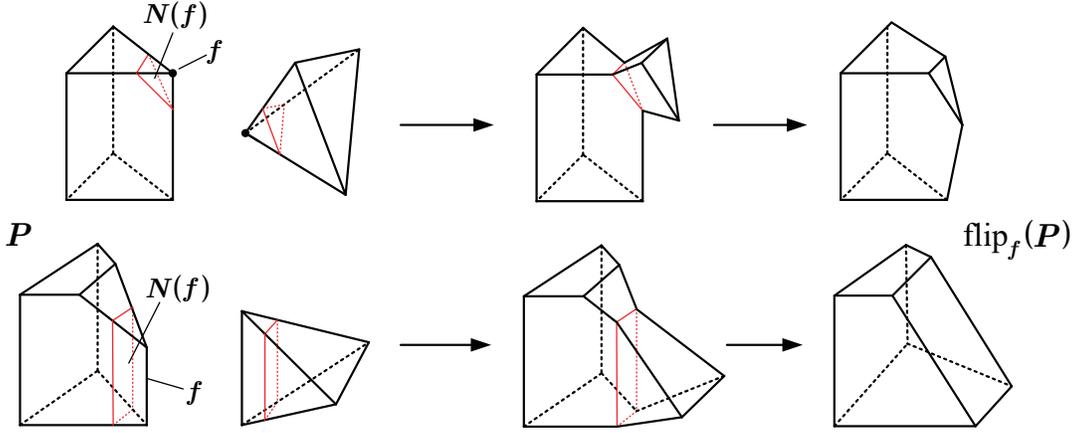}$}
                 }
           }
     \end{equation*}
   \caption{Flip of a simple convex polytope $P$ at a face $f$ } 
   \label{p-Flip}
  \end{figure}
        
  From the viewpoint of the construction of real moment-angle manifolds, a flip of $P$ at a codimension-$k$ face $f$ corresponds to an equivariant surgery on $\R\mathcal{Z}_P$ at some codimension-$k$ submanifolds.
  More specifically, it is
 a $(\Z_2)^m$-equivariant surgery on $\R\mathcal{Z}_P$ if $\dim(f)>0$  or a
 $(\Z_2)^{m+1}$-equivariant surgery on $\R\mathcal{Z}_P \times \Z_2$ if $\dim(f)=0$, where $m$ is the number of facets of $P$ (see~\cite[\S 4]{LuWangYu19}). Then it follows from~\cite[Lemma 2.3]{BosMeer06} that we can obtain
$\R\mathcal{Z}_P$ from $\R\mathcal{Z}_{\Delta^n}= S^n$
(see Example~\ref{Exam-Simplex})
by a sequence of equivariant surgeries for any $n$-dimensional simple convex polytope $P$. Moreover, the sequence of equivariant surgeries induce an (unique) equivariant smooth structure on $\R\mathcal{Z}_P$ step by step from the standard smooth structure on $S^n$.\n

  \begin{prop} \label{Prop-Examples}
     Suppose $P$ is an $n$-dimensional simple convex polytope in $\R^n$ with $n\geq 3$. If $P$ is combinatorially equivalent to a convex polytope that can be obtained from the $n$-simplex $\Delta^n$ by a sequence of flips at faces of codimension at least three, then $P$ can be realized as a right-angled totally geodesic Riemannian polyhedron in a Riemannian $n$-manifold with positive scalar curvature.
   \end{prop}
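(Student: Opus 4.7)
The plan is to reduce the statement to producing a $(\Z_2)^m$-invariant Riemannian metric with positive scalar curvature on the real moment-angle manifold $\R\mathcal{Z}_P$, where $m$ denotes the number of facets of $P$. Once such a metric is in hand, the argument used in the ``if'' direction of Theorem~\ref{Thm-Main-1} applies verbatim: each preimage $\Theta_P^{-1}(F_i)$ is a union of fixed-point components of an isometric involution in $(\Z_2)^m$ and is therefore totally geodesic, and the facets meet at right angles because the canonical $(\Z_2)^m$-action is isometric. Thus $P$, identified with the fundamental domain of this action, is realized as a right-angled totally geodesic Riemannian polyhedron in a Riemannian $n$-manifold of positive scalar curvature, as desired.

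To construct this invariant positive scalar curvature metric, I would induct along the given flip sequence. The base case is $\R\mathcal{Z}_{\Delta^n} = S^n \subset \R^{n+1}$ with the standard round metric, which by Example~\ref{Exam-Simplex} is $(\Z_2)^{n+1}$-invariant and has positive (indeed constant) scalar curvature. For the inductive step, suppose $P'$ is obtained from $P$ by a flip at a face $f$ of codimension at least three. By the discussion immediately preceding the proposition, this flip corresponds to an equivariant surgery of codimension at least three: if $\dim(f) > 0$ it is a $(\Z_2)^m$-equivariant surgery on $\R\mathcal{Z}_P$ producing $\R\mathcal{Z}_{P'}$, while if $\dim(f) = 0$ it is a $(\Z_2)^{m+1}$-equivariant surgery on $\R\mathcal{Z}_P \times \Z_2$ (equipped with the obvious product metric, which is still invariant and of positive scalar curvature) producing $\R\mathcal{Z}_{P'}$, whose number of facets is $m+1$. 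In either case Theorem~\ref{Thm-Surgery-Equiv} converts the inductively obtained invariant positive scalar curvature metric into a canonically invariant positive scalar curvature metric on $\R\mathcal{Z}_{P'}$, completing the induction.

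The subtle point, and the one I expect to need the most care, is to ensure that the smooth structure on $\R\mathcal{Z}_P$ obtained by iterating these equivariant surgeries agrees with the canonical smooth structure coming from the quadric embedding $i_{\mathcal{Z}} : \R\mathcal{Z}_P \to \R^m$ of Section~\ref{Sec-Real-MAM}; only the latter structure is the one for which $P$ literally sits inside $\R\mathcal{Z}_P$ as a Riemannian polyhedron with the prescribed combinatorics, so that the totally geodesic / right-angle conclusions of the first paragraph genuinely describe a realization of $P$. This identification is guaranteed by the uniqueness up to equivariant diffeomorphism of $(\Z_2)^m$-equivariant smooth structures on $\R\mathcal{Z}_P$ quoted from~\cite[Proposition~3.8]{KurMasYu15} in the discussion preceding the statement. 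Once this compatibility is recorded, combining the invariant positive scalar curvature metric furnished by the induction with the ``if'' direction argument of Theorem~\ref{Thm-Main-1} yields the claimed right-angled totally geodesic realization of $P$.
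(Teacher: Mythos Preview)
Your proposal is correct and follows essentially the same approach as the paper's own proof: start from the round metric on $\R\mathcal{Z}_{\Delta^n}=S^n$, propagate an invariant positive scalar curvature metric to $\R\mathcal{Z}_P$ via Theorem~\ref{Thm-Surgery-Equiv} along the equivariant surgeries coming from the flip sequence, and then invoke the ``if'' direction of Theorem~\ref{Thm-Main-1}. Your version is a bit more explicit than the paper's in separating the $\dim(f)>0$ and $\dim(f)=0$ cases and in flagging the smooth-structure compatibility (which the paper handles in the paragraph preceding the proposition rather than in the proof itself), but the logical content is the same.
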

   \begin{proof}
    By our assumption of $P$,  $\R\mathcal{Z}_P$
    can be obtained from $\R\mathcal{Z}_{\Delta^n}= S^n$
    by a sequence of equivariant surgeries at some submanifolds with codimension at least three.  
  Note that the induced Riemannian metric $g_0$ on $S^n$ from $\R^{n+1}$ has positive constant scalar (sectional) curvature,
     and $g_0$ is invariant with respect to the canonical 
     action of $(\Z_2)^{n+1}$ on $S^n$ (see Example~\ref{Exam-Simplex}). Then it follows from Theorem~\ref{Thm-Surgery-Equiv} that $\R\mathcal{Z}_P$ has a $(\Z_2)^m$-invariant Riemannian metric $g_1$ with positive scalar curvature where $m$ is the number of facets of $P$.
    Finally, by applying the 
    proof of the ``if'' part of Theorem~\ref{Thm-Main-1} to $(\R\mathcal{Z}_P,g_1)$,
     we can deduce that
    $P$ is realized as a right-angled totally geodesic Riemannian polyhedron in $(\R\mathcal{Z}_P,g_1)$.
   \end{proof}
   
 To explore the generalization of Theorem~\ref{Thm-Main-1} to higher dimensions, it is natural for us to ask the following question.\n
   
 \begin{itemize}
  \item[\textbf{Question}:] Is it true that the examples given in Proposition~\ref{Prop-Examples} are exactly all the $n$-dimensional simple convex polytopes in $\R^n$ ($n\geq 3$) that can be realized as
   mean curvature convex (or totally geodesic) Riemannian polyhedra with non-obtuse dihedral angles in Riemannian $n$-manifolds with positive scalar curvature?
  \end{itemize}
 \n
 
 To answer the above question, we need to understand 
 for what simple convex polytope $P$ the manifold $\R\mathcal{Z}_P$ can admit a Riemannian metric with positive scalar curvature. It is well known that in dimension $\geq 5$, such kind of problems are related to the existence of spin structures, $\widehat{A}$-genus and $\alpha$-invariant defined by index theory (see~\cite{Stolz02}). In dimension $4$, the existence of 
 Riemannian metrics with positive scalar curvature
 implies the vanishing of Seiberg-Witten invariants.
 But the calculations of these invariants for $\R\mathcal{Z}_P$ are very difficult in general. 
 
\nn
 
  \section*{Acknowledgment}
   The author wants to thank Jiaqiang Mei, Yalong Shi, Xuezhang Chen and Yiyan Xu for some valuable discussions on Riemannian geometry.

\end{document}